\newtheorem{theorem}{Theorem}
\newtheorem{lemma}{Lemma}
\newtheorem{remark}{Remark}
\newtheorem{corollary}{Corollary}
\newtheorem{example}{Example}
\begin{document}

\title{Exponential stability of time-delay systems via new weighted integral inequalities}

\author{L.V. Hien and H. Trinh\thanks{{\em}}

\thanks{L.V. Hien is with the the School of Engineering,
Deakin University, VIC 3217, and the Department of Mathematics,
Hanoi National University of Education, Hanoi, Vietnam (e-mail: hienlv@hnue.edu.vn).}
\thanks{H. Trinh is with the School of Engineering,
Deakin University, VIC 3217, Australia (e-mail: hieu.trinh@deakin.edu.au).}}

\maketitle
\pagestyle{plain}

\begin{abstract}
In this paper, new weighted integral inequalities (WIIs) are first derived
by refining the Jensen single and double inequalities.
It is shown that the newly derived inequalities in this paper encompass both the
Jensen inequality and its most recent improvements based on Wirtinger integral inequality.
The potential capability of the proposed WIIs is demonstrated through applications in 
exponential stability analysis for some classes of time-delay systems
in the framework of linear matrix inequalities (LMIs).
The effectiveness and least conservativeness of the derived stability conditions using
WIIs are shown by various numerical examples.
\end{abstract}

\begin{keywords}
Exponential estimates, time-delay systems, integral-based inequalities, linear  matrix inequalities.
\end{keywords}

\section{Introduction }

The problem of stability analysis and its applications to control of time-delay systems
is essential and of great importance for both theoretical and practical reasons \cite{Gu}.
This problem has attracted considerable attention during the last decade \cite{Sipa,Li,FC,HH}.
Many important results on asymptotic stability of time-delay systems have been established
using the Lyapunov-Krasovskii functional (LKF) method in the framework
of linear matrix inequalities (LMIs) \cite{SG}. It is a fact that asymptotic stability is a
synonym of exponential stability \cite{Mond}, and in many applications, it is important
to find estimates of the transient decaying rate of time-delay systems \cite{Xu06}.
Therefore, a great deal of efforts has been devoted to study exponential stability of
time-delay systems \cite{Mond,Xu06,Liu,RC,KP,Nam08,HP09,ZP,HP11,BNP,PKR,GGXH,Cao}.
To derive an estimate, also referred to as $\alpha$-stability, of the exponential convergence rate
of a time-delay system, various approaches have been proposed in the literature.
For example, state transformation $\xi(t)=e^{\alpha t}x(t)$
combined with the Lyapunov-Krasovskii functional method \cite{Xu06,Liu,RC,KP,Nam08},
model transformation \cite{HP09}, constructing modified LKFs with exponential
weighted functions \cite{Mond,HP11,BNP,PKR,GGXH,Cao}, estimating the
Lyapunov components \cite{ZP} or modified comparison principle \cite{FC,Ngoc}.

However, looking at the literature, it can be realized that the proposed methods in
the aforementioned works usually introduce conservatism in exponential stability 
conditions not only on the exponential convergence rate but also on the maximal allowable
 delay and the number of matrix variables. Therefore, aiming at reducing conservativeness
 of exponential stability conditions, ant important and relevant issue is to improve some integral-based inequalities.

In this paper, we first propose some new weighted integral inequalities (WIIs) which are suitable
 to use in exponential stability analysis for time-delay systems. We show that the
 newly derived inequalities in this paper encompass both the Jensen inequality \cite{GKC}
and some of its recent improvements based on Wirtinger integral inequality \cite{SG,Park}.
We then employ the proposed WIIs to derive new exponential stability
 conditions for some classes of time-delay systems in the framework of linear matrix inequalities.
 Numerical examples are provided in this paper to show the efficiency and potential capability of the newly derived WIIs.

The rest of this paper is organized as follows. In Section 2,
some preliminary results are presented.
New weighted integral inequalities and their applications in exponential stability analysis for
some classes of time-delay systems are presented in Section 3 and Section 4, respectively.
Section 5 provides numerical examples to demonstrate the effectiveness of the obtained results.
The paper ends with a conclusion and references.

\section{Preliminaries}

It can be realized in many contributions that, to derive the exponential estimates
for time-delay systems, a widely used approach is the use of weighted exponential
Lyapunov-Krasovskii functional \cite{Mond}.
For example, a functional of the form
\begin{equation}\label{e2.1}
V(x_t)=\int_{-\tau}^0\int_{t+s}^te^{\alpha(u-t)}\dot{x}^T(u)R\dot{x}(u)duds
\end{equation}
where $x$ is the state vector, scalars $\alpha>0,\tau>0$ and matrix $R>0$,
has been used in many works in the literature \cite{HP11,BNP,PKR,GGXH,Cao}.
The derivative of $V(x_t)$ is given by
\begin{equation}\label{e2.2}
\dot V(x_t)=\tau\dot{x}^T(t)R\dot{x}(t)-\int_{t-\tau}^te^{\alpha(s-t)}\dot{x}^T(s)R\dot{x}(s)ds.
\end{equation}
In order to generate LMIs conditions, an estimate on the second term of \eqref{e2.2} is obviously needed.
The problem raised here is how to find a tighter lower bound of a weighted
integral of quadratic terms in the following form
\[
I_w(\varphi,\alpha)=\int_a^be^{\alpha(s-b)}\varphi^T(s)R\varphi(s)ds
\]
where $\alpha>0$ is a scalar, $\varphi\in C([a,b],\mathbb{R}^n)$
and $R$ is a symmetric positive definite matrix in $\Bbb{R}^{n\times n}$,
$R\in\Bbb{S}^+_n$. When
$\alpha=0$ we write $I(\varphi)$ instead of $I_w(\varphi,0)$.

Inspired from the proof of the Jensen inequality \cite{GKC}, we have the following results
which referred in this paper to as Jensen-based weighted integral inequalities (WIIs)
in single and double forms.

\begin{lemma}\label{lm2.1}
For a given matrix $R\in\Bbb{S}^+_n$, scalars $b>a$, $\alpha>0$, and
a function $\varphi\in C([a,b],\mathbb{R}^n)$,
the following inequalities hold
\begin{equation}\label{e2.3}
I_w(\varphi,\alpha)\geq \frac{\alpha}{\gamma_0}\Big(\int_a^b\varphi(s)ds\Big)^TR\Big(\int_a^b\varphi(s)ds\Big),
\end{equation}
\begin{equation}\label{e2.4}
\begin{aligned}
&\int_a^b\int_s^be^{\alpha(u-b)}\varphi^T(u)R\varphi(u)duds\\
&\quad\geq \frac{\alpha^2}{\gamma_1}\Big(\int_a^b\int_s^b\varphi(u)duds\Big)^T
R\Big(\int_a^b\int_s^b\varphi(u)duds\Big),
\end{aligned}
\end{equation}
where $\gamma_k=e^{\alpha(b-a)}-\sum_{j=0}^k\frac{\alpha^j(b-a)^j}{j!}$, $k\geq 0$.
\end{lemma}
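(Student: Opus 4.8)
The plan is to mimic the classical completion-of-squares proof of the Jensen inequality, but with the exponential weight absorbed into an auxiliary constant vector so that the cross term emerges \emph{without} the exponential factor, thereby lining up with the unweighted integrals on the right-hand sides of \eqref{e2.3}--\eqref{e2.4}.

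\textbf{Single inequality.} For an arbitrary constant vector $\zeta\in\mathbb{R}^n$ I would start from the obvious bound
\begin{equation*}
0\le\int_a^b e^{\alpha(s-b)}\bigl(\varphi(s)-e^{-\alpha(s-b)}\zeta\bigr)^T R\bigl(\varphi(s)-e^{-\alpha(s-b)}\zeta\bigr)\,ds,
\end{equation*}
which holds since $R\in\mathbb{S}^+_n$ and $e^{\alpha(s-b)}>0$. Expanding the square, the cross term collapses to $-2\zeta^T R\int_a^b\varphi(s)\,ds$ because the factors $e^{\alpha(s-b)}$ and $e^{-\alpha(s-b)}$ cancel, while the quadratic-in-$\zeta$ term carries the weight $\int_a^b e^{-\alpha(s-b)}\,ds=\tfrac1\alpha\bigl(e^{\alpha(b-a)}-1\bigr)=\gamma_0/\alpha$, which is exactly where $\gamma_0$ enters. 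Writing $\Phi=\int_a^b\varphi(s)\,ds$, the inequality reads $0\le I_w(\varphi,\alpha)-2\zeta^T R\Phi+\tfrac{\gamma_0}{\alpha}\zeta^T R\zeta$; this convex quadratic in $\zeta$ is minimized at $\zeta=\tfrac{\alpha}{\gamma_0}\Phi$, and substituting that value yields \eqref{e2.3}.

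\textbf{Double inequality.} I would run the same argument over the triangular domain $\{(s,u):a\le s\le u\le b\}$, starting from
\begin{equation*}
0\le\int_a^b\!\!\int_s^b e^{\alpha(u-b)}\bigl(\varphi(u)-e^{-\alpha(u-b)}\zeta\bigr)^T R\bigl(\varphi(u)-e^{-\alpha(u-b)}\zeta\bigr)\,du\,ds.
\end{equation*}
Again the cross term simplifies to $-2\zeta^T R\int_a^b\int_s^b\varphi(u)\,du\,ds$, and the normalizing constant becomes $\int_a^b\int_s^b e^{-\alpha(u-b)}\,du\,ds$. Evaluating the inner integral as $\tfrac1\alpha\bigl(e^{\alpha(b-s)}-1\bigr)$ and then integrating in $s$ gives $\tfrac1{\alpha^2}\bigl(e^{\alpha(b-a)}-1-\alpha(b-a)\bigr)=\gamma_1/\alpha^2$, which pins down $\gamma_1$. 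Completing the square in $\zeta$ and choosing the minimizer exactly as before produces \eqref{e2.4}.

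\textbf{Where the work lies.} There is no serious obstacle: the proof is a one-line positivity statement followed by bookkeeping. The only point requiring care is the choice of auxiliary vector — it must be scaled by $e^{-\alpha(u-b)}$ so that the cross term sheds its weight and matches the right-hand side — together with the observation that $\gamma_0,\gamma_1>0$ (from $e^x>1+x$ and $e^x>1+x+x^2/2$ for $x>0$), which is what makes the quadratics in $\zeta$ genuinely convex and the minimizing substitution legitimate. The double-integral case additionally needs the order of integration kept straight when computing the normalizing constant $\gamma_1/\alpha^2$.
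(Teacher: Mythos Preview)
Your proof is correct and is essentially the same as the paper's: the paper packages the pointwise positivity as the $2\times 2$ block matrix
$\begin{bmatrix} e^{\alpha(s-b)}\varphi^T(s)R\varphi(s)&\varphi^T(s)\\ \varphi(s)&e^{\alpha(b-s)}R^{-1}\end{bmatrix}\geq 0$, integrates, and then applies the Schur complement, whereas you carry out the equivalent completion-of-squares explicitly. The crucial move---placing the weight $e^{\alpha(b-s)}$ on the constant block/vector so that the cross term is unweighted---is identical in both arguments.
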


\begin{proof}
By taking integral of the inequality

$\begin{bmatrix} e^{\alpha(s-b)}\varphi^T(s)R\varphi(s)&\varphi^T(s)\\
\varphi(s)&e^{\alpha(b-s)}R^{-1}\end{bmatrix}\geq 0$
we obtain
\[
\begin{bmatrix} I_w(\varphi,\alpha)&\int_a^b\varphi^T(s)ds\\
\int_a^b\varphi(s)ds&\rho(\alpha)R^{-1}\end{bmatrix}\geq 0
\]
which implies \eqref{e2.3} by Schur complement.
The proof of \eqref{e2.4} is similar and thus it is omitted here.
\end{proof}

\begin{remark}\label{rm2.1}
Obviously $\frac{\alpha}{\gamma_0}>e^{-\alpha(b-a)}$
for all $\alpha>0, b>a$. Therefore, \eqref{e2.3} gives a new lower bound
in comparison to the common estimate $I_w(\varphi,\alpha)\geq e^{-\alpha(b-a)}I(\varphi)$.
\end{remark}

\begin{remark}\label{rm2.2}
When $\alpha$ approaches zero the previous inequalities
lead to the Jensen inequality in single and double form, respectively.
More precisely, from the fact that $\lim_{\alpha\to 0^+}\frac{\gamma_k}{\alpha^k}=\frac{(b-a)^k}{k!}$
we readily obtain the following results
\begin{equation}\label{e2.5}
\int_a^b\varphi^T(s)R\varphi(s)ds
\geq \frac{1}{b-a}\Big(\int_a^b\varphi(s)ds\Big)^TR\Big(\int_a^b\varphi(s)ds\Big),
\end{equation}
\begin{equation}\label{e2.6}
\begin{aligned}
&\int_a^b\int_s^b\varphi^T(u)R\varphi(u)duds\\
&\quad \geq \frac{2}{(b-a)^2}\Big(\int_a^b\int_s^b\varphi(u)duds\Big)^T
R\Big(\int_a^b\int_s^b\varphi(u)duds\Big).
\end{aligned}
\end{equation}
\end{remark}

\section{New weighted integral inequalities}\label{sec:3}

In this section, some new weighted integral inequalities are derived by refining \eqref{e2.3}, \eqref{e2.4}. 
In the following, let us denote
\[
J^g_w(\varphi,\alpha)=I_w(\varphi,\alpha)-\frac{\alpha}{\gamma_0}\Big(\int_a^b\varphi(s)ds\Big)^TR
\Big(\int_a^b\varphi(s)ds\Big)
\]
as the gap of \eqref{e2.3}. By refining \eqref{e2.3} we find a new lower bound for $J_w(\varphi,\alpha)$
other than zero. First, let us introduce the following notations for given scalars $b>a$, $\alpha>0$,
and $\varphi\in C([a,b],\Bbb{R}^n)$
\begin{align*}
& \ell=b-a,\quad A_\alpha=\frac{\gamma_0}{\alpha^2}-\frac{(1+\gamma_0)\ell^2}{\gamma_0},\\
&L_1=\begin{bmatrix}1&-\frac{\alpha\gamma_0}{\gamma_1}\end{bmatrix},\;
\zeta=\mathrm{col}\left\{\int_a^b\varphi(s)ds,\int_a^b\int_s^b\varphi(u)duds\right\}.
\end{align*}
By using the Taylor series expansion of exponential function, it can be verified that
$A_\alpha>0$ for any $\alpha>0$. We also use the notion of Kronecker product $A\otimes B$ for matrices
$A\in\Bbb{R}^{n\times m}, B\in\Bbb{R}^{q\times r}$.
For more details about the Kronecker product, we refer the readers to \cite{SH}.

\begin{lemma}\label{lm3.1}
For a given $n\times n$ matrix $R>0$, scalar $\alpha>0$, and
a function $\varphi\in C([a,b],\mathbb{R}^n)$, the following inequality holds
\begin{equation}\label{e3.1}
J^g_w(\varphi,\alpha)\geq \frac{\alpha}{\rho_0}\zeta^T(L_1^TL_1\otimes R)\zeta
\end{equation}
where $\rho_0=\left(\frac{\alpha\gamma_0}{\gamma_1}\right)^2A_\alpha
=\frac{\gamma_0}{\gamma_1^2}\left[\gamma_0^2-(\alpha\ell)^2e^{\alpha\ell}\right]$.
\end{lemma}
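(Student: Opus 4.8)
The plan is to mimic the proof of Lemma~\ref{lm2.1}, but with a richer positive-semidefinite integrand that ``subtracts off'' the already-extracted single-integral term. Concretely, I would look for a scalar function $h(s)$ on $[a,b]$ and try to show that the matrix
\[
M(s)=\begin{bmatrix} e^{\alpha(s-b)}\varphi^T(s)R\varphi(s) & \varphi^T(s)-h(s)\zeta^T(L_1^T\otimes I)\\[2pt] (\ast) & e^{\alpha(b-s)}R^{-1}\end{bmatrix}
\]
is, after integration over $[a,b]$, of the form $\begin{bmatrix} J^g_w(\varphi,\alpha) & 0\\ 0 & \rho_0\alpha^{-1}R^{-1}\end{bmatrix}\ge 0$, whence \eqref{e3.1} follows by Schur complement exactly as before. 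For this to work the cross term must integrate to zero: $\int_a^b e^{\alpha(s-b)}\varphi^T(s)\,ds = \int_a^b h(s)\,ds\;\zeta^T(L_1^T\otimes I)$ should reproduce precisely the correction built into $J^g_w$, i.e. the combination $\frac{\alpha}{\gamma_0}(\int\varphi)^T$ together with the new $\zeta$-dependent piece. The natural guess, following the Wirtinger-type refinements, is to take $h$ affine, $h(s)=c_0+c_1(s-a)$ or equivalently a first-degree polynomial chosen orthogonal (in the $e^{\alpha(s-b)}$-weighted inner product) to the constants, so that the single- and double-integral contributions decouple; the coefficients $c_0,c_1$ are then forced by these orthogonality/normalization conditions and should come out in terms of $\gamma_0,\gamma_1,\ell,\alpha$.

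The key computational steps, in order, are: (i) fix the ansatz for $h(s)$ as a linear polynomial in $s$ and impose that $\int_a^b e^{\alpha(s-b)}h(s)\,ds$ and $\int_a^b e^{\alpha(s-b)}(s-a)h(s)\,ds$ take the values needed to match the structure of $J^g_w$ — these use the elementary moments $\int_a^b e^{\alpha(s-b)}\,ds=\gamma_0/\alpha$, $\int_a^b e^{\alpha(s-b)}(s-a)\,ds=(\gamma_0-\alpha\ell)/\alpha^2$ (and the next moment), which are exactly what define $\gamma_0,\gamma_1$; (ii) compute $\int_a^b e^{\alpha(b-s)}h(s)^2\,ds$, which produces the scalar playing the role of $\rho_0/\alpha$, and check it equals $\big(\tfrac{\alpha\gamma_0}{\gamma_1}\big)^2 A_\alpha$; (iii) assemble $\int_a^b M(s)\,ds\ge 0$ and read off \eqref{e3.1} via Schur complement, noting that $\zeta^T(L_1^T\otimes I)$ contracted with $h$ reproduces the claimed rank-one form $L_1^TL_1\otimes R$ after the Schur step. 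The Kronecker-product bookkeeping is routine once one writes the off-diagonal block as $\big(\varphi(s)^T - h(s)\,(L_1\zeta)^T\big)$ with $L_1\zeta=\int_a^b\varphi - \frac{\alpha\gamma_0}{\gamma_1}\int_a^b\int_s^b\varphi$.

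The main obstacle I anticipate is step~(ii): verifying that the normalization constant coming out of $\int_a^b e^{\alpha(b-s)}h(s)^2\,ds$ simplifies to the stated $\rho_0=\frac{\gamma_0}{\gamma_1^2}\big[\gamma_0^2-(\alpha\ell)^2e^{\alpha\ell}\big]$ and, in particular, that it is \emph{positive} for all $\alpha>0$. Positivity is not obvious from the closed form, since $\gamma_0^2-(\alpha\ell)^2 e^{\alpha\ell}$ could a priori change sign; here the already-noted fact that $A_\alpha>0$ for all $\alpha>0$ (proved in the excerpt via Taylor expansion of $e^{\alpha\ell}$) is exactly what rescues us, because $\rho_0=\big(\tfrac{\alpha\gamma_0}{\gamma_1}\big)^2 A_\alpha$ is manifestly a positive multiple of $A_\alpha$. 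So the real content is the identity $\frac{\gamma_0}{\gamma_1^2}\big[\gamma_0^2-(\alpha\ell)^2e^{\alpha\ell}\big]=\big(\tfrac{\alpha\gamma_0}{\gamma_1}\big)^2 A_\alpha$, which I would check by substituting $A_\alpha=\frac{\gamma_0}{\alpha^2}-\frac{(1+\gamma_0)\ell^2}{\gamma_0}$ and clearing denominators; a secondary nuisance is keeping the weighted-moment integrals and the choice of $h$ mutually consistent so that \emph{both} the single- and double-integral corrections land with the correct coefficients, but that is linear algebra in two unknowns and should pose no conceptual difficulty.
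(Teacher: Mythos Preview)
Your instinct to introduce an affine auxiliary function is correct, and the moment computations you outline in steps (i)--(ii) are essentially the ones that arise. However, the Schur-complement packaging does not work as written. The matrix $M(s)$ you define is \emph{not} pointwise positive semidefinite: its Schur complement with respect to the $(2,2)$ block equals
\[
e^{\alpha(s-b)}\Bigl[\,2h(s)\,\varphi(s)^TR\,(L_1\otimes I_n)\zeta \;-\; h(s)^2\,\zeta^T(L_1^TL_1\otimes R)\zeta\,\Bigr],
\]
which has no reason to be nonnegative for arbitrary $\varphi$ and $s$. Hence you cannot conclude $\int_a^b M(s)\,ds\ge 0$ by integrating a pointwise inequality, and the argument collapses at step~(iii). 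There is also an inconsistency in your target form: if the integrated matrix were block-diagonal with entries $J^g_w(\varphi,\alpha)$ and $\rho_0\alpha^{-1}R^{-1}$, a Schur complement would give nothing beyond $J^g_w\ge 0$. What you actually need is a nonzero off-diagonal block equal to $(L_1\otimes I_n)\zeta$; but then the $(1,1)$ block of $\int M$ cannot simultaneously be $J^g_w$ and arise from integrating $e^{\alpha(s-b)}\varphi^TR\varphi$ (which yields $I_w$, not $J^g_w$). Relatedly, forcing the off-diagonal $\int\varphi-(\int h)(L_1\otimes I_n)\zeta$ to vanish for \emph{all} $\varphi$ with a single scalar $h$ is impossible, since $L_1\zeta$ contains both $\int\varphi$ and $\iint\varphi$ with a fixed nonzero ratio.

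The paper proceeds differently: it applies Lemma~\ref{lm2.1} as a black box to the perturbed function
\[
\psi(t)=\varphi(t)-\frac{\alpha e^{\alpha(b-t)}}{\gamma_0}\int_a^b\varphi(s)\,ds+h(t)\chi,\qquad h(t)=e^{\alpha(b-t)}p(t),\ p\in\Bbb{P}_1,
\]
with $\chi\in\mathbb{R}^n$ a free parameter. Expanding $J^g_w(\psi,\alpha)\ge 0$ yields an inequality of the form
\[
J^g_w(\varphi,\alpha)\ge -\frac{A_\alpha c_1^2}{\alpha}\chi^TR\chi+\frac{2\gamma_1 c_1}{\alpha\gamma_0}\chi^TR(L_1\otimes I_n)\zeta,
\]
and maximizing the right-hand side over $\chi$ (the optimum is $\chi$ proportional to $(L_1\otimes I_n)\zeta$) gives exactly \eqref{e3.1}, independently of the choice of linear $p$. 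Your scheme can be salvaged by this route: if you replace the $(1,1)$ entry of $M(s)$ by $e^{\alpha(s-b)}\psi(s)^TR\psi(s)$ and the off-diagonal by $\psi(s)^T$, the Schur complement vanishes identically, $M(s)\ge 0$ becomes genuine, and the subsequent integration plus Schur step reproduces the paper's optimization over $\chi$.
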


\begin{proof} 
For any $\varphi\in C([a,b],\mathbb{R}^n)$, we define an approximation
function $\psi\in C([a,b],\mathbb{R}^n)$ as follows
\begin{equation}\label{e3.2}
\psi(t)=\varphi(t)-\frac{\alpha e^{\alpha(b-t)}}{\gamma_0}\int_a^b\varphi(s)ds+h(t)\chi
\end{equation}
where $h(t)$ is a real valued function on $[a,b]$ and $\chi\in\mathbb{R}^n$
 is a constant vector which will be defined later.
For brevity we let $w(t)=e^{\alpha(b-t)}$ and predefine $h(t)=w(t)p(t)$,
where $p(t)$ belongs to $\Bbb{P}_k$,
the set of polynomials of order less than $k$. A prior computation gives
\begin{align}
J^g_w(\psi,\alpha)&=J^g_w(\varphi,\alpha)+J_w(h)\chi^TR\chi\notag\\
&-2\frac{\alpha}{\gamma_0}\int_a^bh(s)ds\chi^TR\int_a^b\varphi(s)ds\notag\\
&+2\chi^TR\bigg(p(a)\int_a^b\varphi(s)ds+p'(a)\int_a^b\int_s^b\varphi(u)duds\notag\\
&+\int_a^bp''(s)\int_s^b\int_u^b\varphi(v)dvduds\bigg),\label{e3.3}
\end{align}
where $J_w(h)=\Big[\int_a^bw^{-1}(s)h^2(s)ds-\frac{\alpha}{\gamma_0}\Big(\int_a^bh(s)ds\Big)^2\Big]$.

Now, for any $p\in\Bbb{P}_1$ which we can write $p(t)=c_0+c_1t$, $c_1\neq 0$.
Then
\begin{align*}
\int_a^bh(s)ds&=\frac{p(a)e^{\alpha\ell}-p(b)}{\alpha}+\frac{c_1\gamma_0}{\alpha^2},\\
\int_a^bw^{-1}(s)h^2(s)ds&=\frac{e^{\alpha\ell}p^2(a)-p^2(b)}{\alpha}\\
&\quad+\frac{2c_1\left(e^{\alpha\ell}p(a)-p(b)\right)}{\alpha^2}+\frac{2c_1^2\gamma_0}{\alpha^3}
\end{align*}
and thus $J_w(h)=\frac{A_\alpha}{\alpha}c_1^2$.
From \eqref{e3.3} we obtain
\begin{equation}\label{e3.4}
J^g_w(\psi,\alpha)=J^g_w(\varphi,\alpha)+\frac{A_\alpha c_1^2}{\alpha}\chi^TR\chi
-\frac{2\gamma_1c_1}{\alpha\gamma_0}\chi^TR(L_1\otimes I_n)\zeta.
\end{equation}
By Lemma \ref{lm2.1} $J^g_w(\psi,\alpha)\geq 0$ which leads to
\begin{equation}\label{e3.5}
J^g_w(\varphi,\alpha)\geq -\frac{A_\alpha c_1^2}{\alpha}\chi^TR\chi
+\frac{2\gamma_1c_1}{\alpha\gamma_0}\chi^TR(L_1\otimes I_n)\zeta.
\end{equation}
Hereafter, we will denote by $\mathcal{R}(J^g_w(\varphi,\alpha))$ the right-hand side of \eqref{e3.5}.
Now we define vector $\chi$ in the form
$\chi=\frac{\lambda}{c_1}(L_1\otimes I_n)\zeta$, where $\lambda$ is a scalar, then
\[\mathcal{R}(J^g_w(\varphi,\alpha)) =\frac1{\alpha}
\Big(2\frac{\gamma_1}{\gamma_0}\lambda-A_\alpha\lambda^2\Big)\zeta^T(L_1^TL_1\otimes R)\zeta.
\]
The function $f(\lambda)=2\frac{\gamma_1}{\gamma_0}\lambda-A_\alpha\lambda^2$
attains its maximum $\frac{\gamma_1^2}{A_\alpha\gamma_0^2}$
at $\lambda=\frac{\gamma_1}{A_\alpha\gamma_0}$. Then it follows from \eqref{e3.5} that
$J^g_w(\varphi,\alpha)\geq \frac{\alpha\gamma_1^2}{A_\alpha(\alpha\gamma_0)^2}
\zeta^T(L_1^TL_1\otimes R)\zeta$ which completes the proof.
\end{proof}

\begin{remark}\label{rm3.1}
It is interesting that estimate \eqref{e3.1}
does not depend on the selection of first-order polynomial $p\in\Bbb{P}_1$.
In other words, inequality \eqref{e3.1} can be derived from \eqref{e3.2}
for any function $h(t)$ of the form $(c_0+c_1t)e^{\alpha(b-t)}$, $c_1\neq 0$.
Of course, when $c_1=0$ then \eqref{e3.2} leads to \eqref{e2.3}.
\end{remark}

\begin{remark}\label{rm3.2}
By repeating the proof of Lemma \ref{lm3.1} with the approximation
\begin{equation}\label{e3.6}
\psi(t)=\varphi(t)-\frac{\alpha^2w(t)}{\gamma_1}\int_a^b\int_s^b\varphi(u)duds+w(t)p(t)\chi
\end{equation}
where $w(t)=e^{\alpha(b-t)}$ and $p\in\Bbb{P}_1$ then \eqref{e2.4}
leads to double WII formulated in the following lemma.
\end{remark}

\begin{lemma}\label{lm3.2}
For a given $n\times n$ matrix $R>0$, scalar $\alpha>0$, and
a function $\varphi\in C([a,b],\mathbb{R}^n)$, the following inequality holds
\begin{align}
&\int_a^b\int_s^be^{\alpha(u-b)}\varphi^T(u)R\varphi(u)duds\\
&\qquad \geq \frac{\alpha^2}{\gamma_1}\hat{\zeta}^T(L_0^TL_0\otimes R)\hat{\zeta}
+\frac{4\alpha^2}{\rho_1}\hat{\zeta}^T(L_2^TL_2\otimes R)\hat{\zeta}\label{e3.7}
\end{align}
where $\hat{\zeta}=\mathrm{col}\left\{\int_a^b\int_s^b\varphi(u)duds,
\int_a^b\int_s^b\int_u^b\varphi(v)dvduds\right\}$,
$L_0=[1\quad 0]$, $L_2=\begin{bmatrix}1&-\frac{\alpha\gamma_1}{\gamma_2}\end{bmatrix}$,
$\rho_1=\left(\frac{\alpha\gamma_1}{\gamma_2}\right)^2B_\alpha$
and $B_\alpha=\frac{2\gamma_1}{\alpha^2}-\frac{[\alpha\ell+(\alpha\ell-1)\gamma_0]\ell^2}{\gamma_1}$.
\end{lemma}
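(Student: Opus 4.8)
The plan is to transcribe the proof of Lemma~\ref{lm3.1} one integration level deeper, now working with the gap of \eqref{e2.4}. Denote by $\mathcal{J}(\varphi)$ the difference between the two sides of \eqref{e2.4}. Since $L_0=[1\quad 0]$, the first term on the right of \eqref{e3.7} equals $\frac{\alpha^2}{\gamma_1}\big(\int_a^b\int_s^b\varphi(u)\,du\,ds\big)^T R\big(\int_a^b\int_s^b\varphi(u)\,du\,ds\big)$, which is exactly the right-hand side of \eqref{e2.4}; hence \eqref{e3.7} is equivalent to the single refinement $\mathcal{J}(\varphi)\ge\frac{4\alpha^2}{\rho_1}\hat\zeta^T(L_2^TL_2\otimes R)\hat\zeta$. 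I would apply \eqref{e2.4} to the approximation $\psi$ in \eqref{e3.6} with $w(t)=e^{\alpha(b-t)}$, $h(t)=w(t)p(t)$ and $p(t)=c_0+c_1t\in\Bbb{P}_1$, $c_1\neq 0$, and then expand $\mathcal{J}(\psi)$ in powers of the free vector $\chi$.

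The first ingredient is the elementary identity $\int_a^b\int_s^b w(u)\,du\,ds=\gamma_1/\alpha^2$, which makes the contribution of the second term of \eqref{e3.6} to $\int_a^b\int_s^b\psi(u)\,du\,ds$ cancel that of $\varphi$, so that $\int_a^b\int_s^b\psi(u)\,du\,ds=H_2\,\chi$ with $H_2:=\int_a^b\int_s^b h(u)\,du\,ds$. Using this together with $w^{-1}h^2=wp^2$ and $w^{-1}h=p$, expanding $\int_a^b\int_s^b e^{\alpha(u-b)}\psi^T(u)R\psi(u)\,du\,ds$ and collecting terms gives
\begin{equation*}
\mathcal{J}(\psi)=\mathcal{J}(\varphi)+J^{(2)}_w(h)\,\chi^TR\chi+2\chi^TR\Big[\int_a^b\!\int_s^b p(u)\varphi(u)\,du\,ds-\tfrac{\alpha^2}{\gamma_1}H_2\!\int_a^b\!\int_s^b\varphi(u)\,du\,ds\Big],
\end{equation*}
where $J^{(2)}_w(h):=\int_a^b\int_s^b w^{-1}(u)h^2(u)\,du\,ds-\frac{\alpha^2}{\gamma_1}H_2^2$ is the double analogue of $J_w(h)$. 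Two sub-computations then complete the preparation. First, two successive integrations by parts give $\int_a^b\int_s^b u\varphi(u)\,du\,ds=a\int_a^b\int_s^b\varphi(u)\,du\,ds+2\int_a^b\int_s^b\int_u^b\varphi(v)\,dv\,du\,ds$, hence $\int_a^b\int_s^b p(u)\varphi(u)\,du\,ds=p(a)\int_a^b\int_s^b\varphi(u)\,du\,ds+2c_1\int_a^b\int_s^b\int_u^b\varphi(v)\,dv\,du\,ds$; inserting the explicit value of $H_2$ and using $\gamma_0-\alpha\ell=\gamma_1$ and $2\gamma_2=2\gamma_0-2\alpha\ell-\alpha^2\ell^2$, the scalar $p(a)-\frac{\alpha^2}{\gamma_1}H_2$ collapses to $-\frac{2c_1\gamma_2}{\alpha\gamma_1}$ (notably, independent of $c_0$), so the bracket above becomes $-\frac{2c_1\gamma_2}{\alpha\gamma_1}(L_2\otimes I_n)\hat\zeta$. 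Second, since $\alpha^2/\gamma_1=1/\int_a^b\int_s^b w$, the quantity $J^{(2)}_w(h)$ is unchanged when a constant is added to $p$, so---as in Remark~\ref{rm3.1}---it depends on the slope $c_1$ only; a direct computation with $p(t)=t$ gives $J^{(2)}_w(h)=\frac{B_\alpha}{\alpha^2}c_1^2$ with $B_\alpha$ as in the statement, and $B_\alpha>0$ for all $\alpha>0$ follows, as for $A_\alpha$, from the Taylor expansion of $e^{\alpha\ell}$.

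With these substitutions the expansion reads $\mathcal{J}(\psi)=\mathcal{J}(\varphi)+\frac{B_\alpha c_1^2}{\alpha^2}\chi^TR\chi-\frac{4c_1\gamma_2}{\alpha\gamma_1}\chi^TR(L_2\otimes I_n)\hat\zeta$, the exact counterpart of \eqref{e3.4}. By Lemma~\ref{lm2.1}, $\mathcal{J}(\psi)\ge 0$; rearranging and setting $\chi=\frac{\lambda}{c_1}(L_2\otimes I_n)\hat\zeta$, and using $(L_2^T\otimes I_n)R(L_2\otimes I_n)=L_2^TL_2\otimes R$, yields $\mathcal{J}(\varphi)\ge\big(\frac{4\gamma_2}{\alpha\gamma_1}\lambda-\frac{B_\alpha}{\alpha^2}\lambda^2\big)\hat\zeta^T(L_2^TL_2\otimes R)\hat\zeta$. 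The scalar function $\frac{4\gamma_2}{\alpha\gamma_1}\lambda-\frac{B_\alpha}{\alpha^2}\lambda^2$ attains its maximum $\frac{4\gamma_2^2}{\gamma_1^2B_\alpha}=\frac{4\alpha^2}{\rho_1}$ at $\lambda=\frac{2\alpha\gamma_2}{\gamma_1B_\alpha}$, and adding back the right-hand side of \eqref{e2.4} gives \eqref{e3.7}. I expect the main obstacle to be purely computational: carrying the double and triple integrals through the two integrations by parts without sign slips, and confirming that the rather unwieldy combinations reduce to exactly the stated $B_\alpha$ and $\rho_1$---equivalently, that $B_\alpha>0$, so that the quadratic in $\lambda$ genuinely has an interior maximum. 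The structural skeleton, by contrast, is a faithful copy of the proof of Lemma~\ref{lm3.1}.
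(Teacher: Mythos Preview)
Your proposal is correct and follows exactly the approach the paper indicates in Remark~\ref{rm3.2}: repeat the proof of Lemma~\ref{lm3.1} with the approximation \eqref{e3.6}, now using the gap $\mathcal{J}(\varphi)$ of the double inequality \eqref{e2.4} in place of $J^g_w(\varphi,\alpha)$. The paper gives no further details beyond that remark, so your explicit computations of $H_2$, the integration-by-parts identity for $\int_a^b\int_s^b u\varphi(u)\,du\,ds$, the reduction of the bracket to $-\tfrac{2c_1\gamma_2}{\alpha\gamma_1}(L_2\otimes I_n)\hat\zeta$, and the optimization in $\lambda$ simply flesh out what the paper leaves implicit.
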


\begin{remark}\label{rm3.3}
The following facts can be found by using Taylor series of the exponential function
\begin{align*}
&\lim_{\alpha\to 0^+}\frac{\alpha}{\rho_0}=\frac{3}{b-a},\;
\lim_{\alpha\to 0^+}L_1=\hat{L}_1=\begin{bmatrix}1&\frac{-2}{b-a}\end{bmatrix},\\
&\lim_{\alpha\to 0^+}\frac{\alpha^2}{\rho_1}=\frac{4}{(b-a)^2},\;
\lim_{\alpha\to 0^+}L_2=\hat{L}_2=\begin{bmatrix}1&\frac{-3}{b-a}\end{bmatrix}.
\end{align*}

Therefore, when $\alpha$ approaches zero we obtain the following results which are the same
as those derived by the Wirtinger inequality in single and double form \cite{SG,Park}
\begin{equation}\label{e3.8}
\int_a^b\varphi^T(s)R\varphi(s)ds
\geq \frac{1}{b-a}\zeta^T(L_0^TL_0+3\hat{L}_1^T\hat{L}_1)\otimes R\zeta
\end{equation}
\begin{align}
\int_a^b\int_s^b&\varphi^T(u)R\varphi(u)duds\notag\\
&\geq \frac{2}{(b-a)^2}\hat{\zeta}^T(L_0^TL_0+8\hat{L}_2^T\hat{L}_2)\otimes R\hat{\zeta}.\label{e3.9}
\end{align}
\end{remark}

\begin{remark}\label{rm3.4}
The results obtained in Lemma \ref{lm3.1} and Lemma \ref{lm3.2} can be extended using \eqref{e3.2} and \eqref{e3.6}
where $h(t)=e^{\alpha(b-t)}p(t)$ and $p(t)$ belongs to the set of higher order polynomials, and then
new lower bounds for \eqref{e3.1}, \eqref{e3.7} can be derived. This will be addressed in future works.
\end{remark}

\section{Exponential estimates for time-delay systems}\label{sec:4}

This section aims to demonstrate the effectiveness of the
newly weighted integral inequalities proposed in this paper
through applications to exponential stability analysis for two classes of time-delay systems.

\subsection{Systems with discrete and distributed constant delays}

Consider the following time-delay system
\begin{equation}\label{e4.1}
\begin{aligned}
&\dot x(t)=A_0x(t)+A_1x(t-h)+A_2\int_{t-h}^tx(s)ds,\; t\geq 0, \\
&x(t)=\phi(t),\; t\in[-h,0],
\end{aligned}
\end{equation}
where $A_0,A_1,A_2\in\Bbb{R}^{n\times n}$ are given constant matrices, $h\geq 0$ is known time-delay,
$\phi\in C([-h,0],\Bbb{R}^n)$ is the initial condition.

We recall here that \cite{Mond,HP09}, for a given $\sigma>0$, system \eqref{e4.1} is exponentially stable
with convergence rate $\sigma$ if there exists $\beta>0$ such that any solution $x(t,\phi)$ of \eqref{e4.1} satisfies
$\|x(t,\phi)\|\leq \beta\|\phi\|e^{-\sigma t},\; \forall t\geq 0$.

Let $e_i\in\Bbb{R}^{n\times 4n}$ defined by
$e_i=[0_{n\times(i-1)n}\; I_n \; 0_{n\times(4-i)n}]$, $i=1,\ldots,4$.
We denote $\mathcal{A}=A_0e_1+A_1e_2+A_2e_3$ and the following matrices
\begin{align*}
&F_0=\begin{bmatrix}e_1\\ e_3\\ e_4\end{bmatrix},\quad 
F_1=\begin{bmatrix}\mathcal{A}\\ e_1-e_2\\ he_1-e_3\end{bmatrix},\\
&F_2=\begin{bmatrix}e_1-e_2\\ he_1-e_3\end{bmatrix},\quad
F_3=\begin{bmatrix}he_1-e_3\\ h^2/2e_1-e_4\end{bmatrix},\\
&\Pi_0=F_0^TPF_1+F_1^TPF_0+\alpha F_0^TPF_0,\\
&\Pi_1=e_1^TQe_1-e^{-\alpha h}e_2^TQe_2+\mathcal{A}^T(hR+h^2/2Z)\mathcal{A},\\
&\Pi_2=\frac{\alpha}{\gamma_0}(e_1-e_2)^TR(e_1-e_2)+\frac{\alpha}{\rho_0}F_2^T(L_1^TL_1\otimes R)F_2,\\
&\Pi_3=\frac{\alpha^2}{\gamma_1}(he_1-e_3)^TZ(he_1-e_3)
+\frac{4\alpha^2}{\rho_1}F_3^T(L_2^TL_2\otimes Z)F_3
 \end{align*}
where $L_1,L_2$ and $\gamma_0,\gamma_1,\rho_0,\rho_1$ are defined in
\eqref{e2.3}, \eqref{e2.4}, \eqref{e3.1}, \eqref{e3.7} with $a=-h, b=0$.

\begin{theorem}\label{thm4.1}
Assume that, for a given $\alpha>0$, there exist symmetric positive definite matrices $P\in\Bbb{R}^{3n\times 3n}$,
$Q,R,Z\in\Bbb{R}^{n\times n}$ satisfying the following LMI
\begin{equation}\label{e4.2}
\Pi_0+\Pi_1-\Pi_2-\Pi_3<0.
\end{equation}
Then system \eqref{e4.1} is exponentially stable with a convergence rate $\sigma=\alpha/2$.
\end{theorem}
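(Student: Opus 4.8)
The plan is to construct a Lyapunov--Krasovskii functional that, after the exponential weighting, reduces the exponential stability problem for \eqref{e4.1} to feasibility of the LMI \eqref{e4.2}, and then to bound the derivative along trajectories using Lemmas \ref{lm2.1}, \ref{lm3.1} and \ref{lm3.2}. Concretely, I would set $\xi(t)=\mathrm{col}\{x(t),\int_{t-h}^t x(s)\,ds,\int_{t-h}^t\int_s^t x(u)\,du\,ds\}$ and take
\begin{equation*}
\begin{aligned}
V(x_t)&=\xi^T(t)P\xi(t)+\int_{t-h}^t e^{\alpha(s-t)}x^T(s)Qx(s)\,ds\\
&\quad+\int_{-h}^0\!\!\int_{t+s}^t e^{\alpha(u-t)}\dot x^T(u)R\dot x(u)\,du\,ds\\
&\quad+\int_{-h}^0\!\!\int_s^0\!\!\int_{t+u}^t e^{\alpha(v-t)}\dot x^T(v)Z\dot x(v)\,dv\,du\,ds,
\end{aligned}
\end{equation*}
so that $V$ is positive definite and radially controlled. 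Differentiating, the $P$-term yields $\Pi_0$ (using $\frac{d}{dt}$ of the single and double integral entries of $\xi$ and the relabelling $a=-h$, $b=0$, $\varphi=x$ or $\dot x$), the $Q$-term yields the $e_1^TQe_1-e^{-\alpha h}e_2^TQe_2$ part of $\Pi_1$, and the $R$- and $Z$-terms contribute the $h\dot x^TR\dot x$ and $\frac{h^2}{2}\dot x^TZ\dot x$ pieces, which upon substituting $\dot x(t)=\mathcal A\xi(t)$ give the $\mathcal A^T(hR+h^2/2Z)\mathcal A$ part of $\Pi_1$.

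The key analytic step is estimating the two negative weighted integral remainders $-\int_{t-h}^t e^{\alpha(s-t)}\dot x^T(s)R\dot x(s)\,ds$ and $-\int_{-h}^0\int_{t+s}^t e^{\alpha(u-t)}\dot x^T(u)Z\dot x(u)\,du\,ds$. For the first I would apply \eqref{e2.3} together with the refinement \eqref{e3.1} from Lemma \ref{lm3.1}, identifying $\int_{t-h}^t\dot x(s)\,ds=x(t)-x(t-h)$ and $\int_{t-h}^t\int_s^t\dot x(u)\,du\,ds=h x(t)-\int_{t-h}^t x(s)\,ds$, i.e.\ $(e_1-e_2)\xi(t)$ and $(he_1-e_3)\xi(t)$, which matches $F_2$; this produces exactly $-\Pi_2$. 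For the second I would apply Lemma \ref{lm3.2} with $\varphi=\dot x$, using $\int_{-h}^0\int_s^0\dot x(u+t)\,du\,ds=hx(t)-\int_{t-h}^tx(s)\,ds$ and the triple integral $\int_{-h}^0\int_s^0\int_u^0\dot x(v+t)\,dv\,du\,ds=\frac{h^2}{2}x(t)-\int_{t-h}^t\int_s^t x(u)\,du\,ds$, i.e.\ the rows of $F_3$; this gives $-\Pi_3$. Collecting everything, $\dot V(x_t)\le \xi^T(t)\big(\Pi_0+\Pi_1-\Pi_2-\Pi_3\big)\xi(t)\le 0$ whenever \eqref{e4.2} holds, hence $V(x_t)\le V(x_0)$, and the exponential weights inside $V$ then force $\|x(t)\|\le \beta\|\phi\|e^{-\alpha t/2}$ for a suitable $\beta>0$, giving convergence rate $\sigma=\alpha/2$.

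I expect the main obstacle to be the bookkeeping that shows the various iterated integrals arising from $\dot V$ align precisely with the matrices $F_0,F_1,F_2,F_3$ and the augmented vectors $\zeta,\hat\zeta$ of Lemmas \ref{lm3.1}--\ref{lm3.2}; in particular one must verify that the constants $\frac{\alpha}{\gamma_0},\frac{\alpha}{\rho_0},\frac{\alpha^2}{\gamma_1},\frac{4\alpha^2}{\rho_1}$ emerging from the lemmas with $a=-h,b=0$ are exactly those appearing in $\Pi_2,\Pi_3$, and that the double-integral Leibniz differentiation of the $\int_{-h}^0\int_{t+s}^t$ and triple-integral terms produces the single and lower-order integrals cleanly. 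A secondary technical point is the standard but necessary argument, from the structure of $V$, that positive-definiteness of $P,Q,R,Z$ gives lower and upper bounds $c_1\|x(t)\|^2\le V(x_t)$ and $V(x_0)\le c_2\|\phi\|^2_{C^1}$ — or, to avoid the $C^1$ norm on the initial data, a preliminary bound on $\|\dot x\|$ over a short initial interval — from which the claimed exponential estimate with rate $\alpha/2$ follows once $V(x_t)$ is shown non-increasing.
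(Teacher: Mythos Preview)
Your Lyapunov--Krasovskii functional and the use of Lemmas~\ref{lm3.1} and~\ref{lm3.2} are exactly right, but the differential inequality you derive is not. You claim that differentiating $V$ produces $\Pi_0+\Pi_1$ and hence $\dot V(x_t)\le \chi^T(\Pi_0+\Pi_1-\Pi_2-\Pi_3)\chi$. This is false: note that $\Pi_0$ contains the term $\alpha F_0^TPF_0$, which is $\alpha\,\tilde x^T(t)P\tilde x(t)$ and does \emph{not} arise from $\frac{d}{dt}\big(\tilde x^TP\tilde x\big)$; likewise, differentiating the $Q$-integral leaves a residual $-\alpha\int_{t-h}^te^{\alpha(s-t)}x^T(s)Qx(s)\,ds$ that is not absorbed into $\Pi_1$, and similarly for the $R$- and $Z$-integrals. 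What actually holds---and what makes the constants in $\Pi_0,\Pi_1$ come out as written---is
\[
\dot V(x_t)+\alpha V(x_t)\;\le\;\chi^T(t)\big(\Pi_0+\Pi_1-\Pi_2-\Pi_3\big)\chi(t),
\]
so \eqref{e4.2} gives $\dot V+\alpha V\le 0$, hence $V(x_t)\le V(\phi)e^{-\alpha t}$, and the bound $V(x_t)\ge\lambda_{\min}(P)\|x(t)\|^2$ yields the rate $\alpha/2$ directly.

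Your alternative route---``$\dot V\le 0$, hence $V(x_t)\le V(x_0)$, and the exponential weights inside $V$ then force $\|x(t)\|\le\beta\|\phi\|e^{-\alpha t/2}$''---does not work. The weights $e^{\alpha(s-t)}\le 1$ in the integral terms only make those terms smaller; they give no decay of the dominant quadratic term $\tilde x^T(t)P\tilde x(t)$. From $V(x_t)\le V(x_0)$ you obtain only boundedness of $x(t)$, not exponential convergence. (A minor additional point: the quadratic forms $\Pi_i$ act on the $4n$-vector $\chi(t)=\mathrm{col}\{x(t),x(t-h),\int_{t-h}^t x,\int_{t-h}^t\!\int_s^t x\}$, not on your $3n$-vector $\xi(t)$; in particular $\dot x(t)=\mathcal A\chi(t)$, since $x(t-h)$ is needed.)
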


\begin{proof}
Consider the following Lyapunov-Krasovskii functional
\begin{equation}\label{e4.3}
\begin{aligned}
V(x_t)=\; &\tilde{x}^T(t)P\tilde{x}(t)+\int_{t-h}^te^{\alpha(s-t)}x^T(s)Qx(s)ds\\
&+\int_{-h}^0\int_{t+s}^te^{\alpha(u-t)}\dot{x}^T(u)R\dot{x}(u)duds\\
&+\int_{-h}^0\int_s^0\int_{t+u}^te^{\alpha(\theta-t)}\dot{x}^T(\theta)Z\dot{x}(\theta)duds
\end{aligned}
\end{equation}
where $\tilde{x}(t)=\mathrm{col}\left\{x(t),\int_{t-h}^tx(s)ds,\int_{t-h}^t\int_s^tx(u)duds\right\}$.

It follows from \eqref{e4.3} that $V(x_t)\geq\lambda_{\min}(P)\|x(t)\|^2$.
Taking derivative of $V(x_t)$ along trajectories of \eqref{e4.1} we obtain
\begin{equation}\label{e4.4}
\begin{aligned}
\dot V(x_t)&+\alpha V(x_t)
=\chi^T(t)\big(\Pi_0+\Pi_1\big)\chi(t)\\
&-\int_{t-h}^te^{\alpha(s-t)}\dot{x}^T(s)R\dot{x}(s)\\
&-\int_{-h}^0\int_{t+s}^te^{\alpha(u-t)}\dot{x}^T(u)Z\dot{x}(u)duds
\end{aligned}
\end{equation}
where\\
$\chi(t)=\mathrm{col}\{x(t),x(t-h), \int_{t-h}^tx(s)ds,\int_{-h}^0\int_{t+s}^tx(u)duds\}$.

By applying Lemma \ref{lm3.1} and Lemma \ref{lm3.2} to
the first and the second terms in \eqref{e4.4}, respectively, we then obtain
\begin{equation}\label{e4.5}
\dot V(x_t)+\alpha V(x_t)\leq\chi^T(t)(\Pi_0+\Pi_1-\Pi_2-\Pi_3)\chi(t).
\end{equation}
It follows from \eqref{e4.2} and \eqref{e4.5} that
$\dot V(x_t)+\alpha V(x_t)\leq 0$,
which yields $V(x_t)\leq V(\phi)e^{-\alpha t}$.
This leads to $\|x(t,\phi)\|\leq\sqrt{\frac{V(\phi)}{\lambda_{\min}(P)}}e^{-\alpha/2 t}$.
The proof is completed.
\end{proof}

\begin{remark}\label{rm4.1}
Note that the exponential transformation $z(t)=e^{\sigma t}x(t),\sigma>0$,
in general, is not applicable to access the exponential stability of system \eqref{e4.1}
because it leads to a system with time-varying coefficients. When $A_2=0$, using
the aforementioned transformation, system \eqref{e4.1} becomes
\begin{equation}\label{e4.6}
\dot z(t)=(A_0+\sigma I_n)z(t)+e^{\sigma h}A_1z(t-h).
\end{equation}

In many works found in the literature, in order to get exponential estimates for
system \eqref{e4.1} (with $A_2=0$), it was transformed to \eqref{e4.6} first and then
asymptotic stability conditions for \eqref{e4.6} were proposed. However, this approach
usually produces conservatism in exponential stability conditions due to the fact that
the exponential stability of  \eqref{e4.1} (with $A_2=0$) is just equivalent to the
boundedness of \eqref{e4.6} which is less restrictive then asymptotic stability.
Differ from those, and as discussed in Section 2 in this paper, we here propose
an improved approach used in exponential stability analysis for time-delay systems
by employing our new weighted inequalities derived in Lemma \ref{lm3.1} and Lemma \ref{lm3.2}.
in this paper.
\end{remark}

\subsection{Systems with interval time-varying delay}

Consider a class of linear systems with interval time-varying delay of the form
\begin{equation}\label{e4.7}
\begin{cases}
\dot x(t)=Ax(t)+A_dx(t-h(t)),\; t\geq 0\\
x(t)=\phi(t),\; t\in[-h_2,0]
\end{cases}
\end{equation}
where $A,A_d\in\mathbb{R}^{n\times n}$ are given constant matrices,
$h(t)$ is time-varying delay satisfying $0\leq h_1\leq h(t)\leq h_2$, where $h_1, h_2$ are known
constants involving the upper and the lower bounds of time-varying delay.

Let $e_i=[0_{n\times(i-1)n}\;\; I_n\;\;  0_{n\times(7-i)n}]$, $i=1,2,\ldots, 7$.
We denote $\mathcal{A}=Ae_1+A_de_3$ and
\begin{align*}
&\chi_1(t)=\mathrm{col}\left\{\begin{bmatrix}x(t)\\ x(t-h_1)\\ x(t-h(t))\\ x(t-h_2)\end{bmatrix},
\begin{bmatrix}\frac1{h_1}\int_{t-h_1}^tx(s)ds\\ \frac1{h(t)-h_1}\int_{t-h(t)}^{t-h_1}x(s)ds\\
 \frac1{h_2-h(t)}\int_{t-h_2}^{t-h(t)}x(s)ds\end{bmatrix}\right\},\\
&\Upsilon(h)=\mathrm{col}\{e_1, h_1e_5,(h-h_1)e_6+(h_2-h)e_7\},\\
&\Upsilon_0=\mathrm{col}\{ \mathcal{A}, e_1-e_2,e_2-e_4\},\\
&\Upsilon_1=\mathrm{col}\{e_1-e_2,h_1(e_1-e_5)\}, \\
& \Upsilon_2=\mathrm{col}\{e_2-e_3,e_2+e_3-2e_6\},\\
&\Upsilon_3=\mathrm{col}\{e_3-e_4,e_3+e_4-2e_7\}, \quad  \Delta=[\Upsilon_2^T\; \Upsilon_3^T]^T,\\
&\Omega_0(h)=\Upsilon(h)^TP\Upsilon_0+\Upsilon_0^TP\Upsilon(h)+\alpha \Upsilon(h)^TP\Upsilon(h),\\
&\Omega_1=e_1^TQ_1e_1-e^{-\alpha h_1}e_2^TQ_1e_2\\
&\qquad +e^{-\alpha h_1}e_2^TQ_2e_2-e^{-\alpha h_2}e_4^TQ_2e_4,\\
&\Omega_2=\mathcal{A}^T\left(h_1^2R_1+h_{12}^2e^{\alpha h_1}R_2\right)\mathcal{A},\\
&\Omega_3=\frac{\alpha h_1}{\tilde{\gamma}_0}(e_1-e_2)^TR_1(e_1-e_2)
+\frac{\alpha h_1}{\tilde{\rho}_0}\Upsilon_1^T(\tilde{L}_1^T\tilde{L}_1\otimes R_1)\Upsilon_1,\\
&\tilde{\gamma}_0=e^{\alpha h_1}-1, \quad \tilde{\gamma}_1=\tilde{\gamma}_0-\alpha h_1, \quad
\tilde{L}_1=[1\quad -\alpha\tilde{\gamma}_0/\tilde{\gamma}_1],\\
&\tilde{\rho}_0=\frac{\tilde{\gamma}_0}{\tilde{\gamma}_1^2}
\left[\tilde{\gamma}_0^2-(\alpha h_1)^2e^{\alpha h_1}\right],\quad h_{12}=h_2-h_1.
\end{align*}

\begin{theorem}\label{thm4.2}
Assume that there exist symmetric positive definite matrices
$P\in\Bbb{R}^{3n\times 3n}$, $Q_i, R_i\in\Bbb{R}^{n\times n}$,
$i=1,2$, and a matrix $X\in\Bbb{R}^{2n\times 2n}$
such that the following LMIs hold for $h\in\{h_1,h_2\}$
\begin{align}
&\Pi=\begin{bmatrix} \tilde{R}_2 & X\\ *& \tilde{R}_2\end{bmatrix}\geq 0,\label{e4.8}\\
&\Omega(h)=\Omega_0(h)+\Omega_1+\Omega_2-\Omega_3-e^{-\alpha h_{12}}\Delta^T\Pi\Delta<0,\label{e4.9}
\end{align}
where $\tilde{R}_2=\mathrm{diag}\{R_2,3R_2\}$. 
Then system \eqref{e4.7} is exponentially stable with a convergence rate $\alpha/2$.
\end{theorem}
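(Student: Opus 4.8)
The plan is to follow the pattern of the proof of Theorem~\ref{thm4.1}, with a Lyapunov--Krasovskii functional adapted to the interval structure of $h(t)$. I would take
\begin{align*}
V(x_t)=\;&\tilde{x}^T(t)P\tilde{x}(t)
+\int_{t-h_1}^{t}e^{\alpha(s-t)}x^T(s)Q_1x(s)\,ds\\
&+\int_{t-h_2}^{t-h_1}e^{\alpha(s-t)}x^T(s)Q_2x(s)\,ds\\
&+h_1\int_{-h_1}^{0}\int_{t+\theta}^{t}e^{\alpha(s-t)}\dot{x}^T(s)R_1\dot{x}(s)\,ds\,d\theta\\
&+h_{12}e^{\alpha h_1}\int_{-h_2}^{-h_1}\int_{t+\theta}^{t}e^{\alpha(s-t)}\dot{x}^T(s)R_2\dot{x}(s)\,ds\,d\theta,
\end{align*}
with $\tilde{x}(t)=\mathrm{col}\{x(t),\int_{t-h_1}^{t}x(s)ds,\int_{t-h_2}^{t-h_1}x(s)ds\}$, so that $\tilde{x}(t)=\Upsilon(h(t))\chi_1(t)$ and $\dot{\tilde{x}}(t)=\Upsilon_0\chi_1(t)$. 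Since $P,Q_i,R_i>0$, the last four terms are nonnegative, giving $V(x_t)\ge\lambda_{\min}(P)\|x(t)\|^2$. Differentiating along \eqref{e4.7} and adding $\alpha V(x_t)$, the term $\tilde{x}^TP\tilde{x}$ produces $\chi_1^T\Omega_0(h(t))\chi_1$, the two weighted single integrals produce $\chi_1^T\Omega_1\chi_1$, and the two weighted double integrals contribute the ``leading'' quadratic $\chi_1^T\Omega_2\chi_1$ (the prefactor $e^{\alpha h_1}$ in the $R_2$-term being exactly what yields the block $h_{12}^2e^{\alpha h_1}R_2$) together with the two negative terms $-h_1I_w(\dot{x},\alpha)$ over $[-h_1,0]$ and $-h_{12}e^{\alpha h_1}\int_{t-h_2}^{t-h_1}e^{\alpha(s-t)}\dot{x}^T(s)R_2\dot{x}(s)\,ds$.

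I would then bound these two negative terms. For the first, Lemma~\ref{lm3.1} with $a=-h_1$, $b=0$, $\varphi=\dot{x}$ (noting $\int_{-h_1}^{0}\dot{x}(t+s)ds=x(t)-x(t-h_1)$ and $\int_{-h_1}^{0}\int_{s}^{0}\dot{x}(t+u)du\,ds=h_1x(t)-\int_{t-h_1}^{t}x(s)ds$, i.e. the two blocks of $\Upsilon_1\chi_1$) gives $h_1I_w(\dot{x},\alpha)\ge\chi_1^T\Omega_3\chi_1$, hence $-h_1I_w(\dot{x},\alpha)\le-\chi_1^T\Omega_3\chi_1$. For the second term I would first use $e^{\alpha(s-t)}\ge e^{-\alpha h_2}$ on $[t-h_2,t-h_1]$, then split the integral at $t-h(t)$ and apply the Wirtinger-based inequality (the $\alpha\to0^+$ form \eqref{e3.8}, equivalently \cite{SG,Park}) to each piece with $\tilde{R}_2=\mathrm{diag}\{R_2,3R_2\}$, obtaining $\int_{t-h_2}^{t-h_1}\dot{x}^TR_2\dot{x}\,ds\ge\tfrac{1}{h(t)-h_1}\chi_1^T\Upsilon_2^T\tilde{R}_2\Upsilon_2\chi_1+\tfrac{1}{h_2-h(t)}\chi_1^T\Upsilon_3^T\tilde{R}_2\Upsilon_3\chi_1$, and finally the improved reciprocally convex combination lemma: since \eqref{e4.8} holds, $\tfrac{1}{\beta}\Upsilon_2^T\tilde{R}_2\Upsilon_2+\tfrac{1}{1-\beta}\Upsilon_3^T\tilde{R}_2\Upsilon_3\ge\Delta^T\Pi\Delta$ with $\beta=(h(t)-h_1)/h_{12}\in[0,1]$. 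Collecting the scalar factors $h_{12}e^{\alpha h_1}\cdot e^{-\alpha h_2}\cdot h_{12}^{-1}=e^{-\alpha h_{12}}$ then gives $-h_{12}e^{\alpha h_1}\int_{t-h_2}^{t-h_1}e^{\alpha(s-t)}\dot{x}^TR_2\dot{x}\,ds\le-e^{-\alpha h_{12}}\chi_1^T\Delta^T\Pi\Delta\chi_1$.

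Adding everything, $\dot{V}(x_t)+\alpha V(x_t)\le\chi_1^T(t)\Omega(h(t))\chi_1(t)$. The map $h\mapsto\Omega(h)$ is affine in $h$ except for the term $\alpha\Upsilon(h)^TP\Upsilon(h)$, which is convex quadratic (its $h^2$-coefficient is $\alpha(\partial_h\Upsilon)^TP(\partial_h\Upsilon)\ge0$ since $P>0$); hence $\chi^T\Omega(h)\chi$ is convex in $h$ for every $\chi$, so its maximum over $h\in[h_1,h_2]$ is attained at $h\in\{h_1,h_2\}$. Consequently the two LMIs \eqref{e4.9} force $\Omega(h(t))<0$ for all $t\ge0$, whence $\dot{V}(x_t)+\alpha V(x_t)\le0$, so $V(x_t)\le V(\phi)e^{-\alpha t}$, and therefore $\|x(t,\phi)\|\le\sqrt{V(\phi)/\lambda_{\min}(P)}\,e^{-\alpha t/2}$, i.e. exponential stability with convergence rate $\alpha/2$.

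I expect the main difficulties to be bookkeeping rather than conceptual: (i) choosing the exponential weights inside $V$ --- in particular the $e^{\alpha h_1}$ prefactor of the $R_2$ double integral --- so that, after the crude bound $e^{\alpha(s-t)}\ge e^{-\alpha h_2}$ on $[t-h_2,t-h_1]$, the reciprocally convex contribution emerges with exactly the coefficient $e^{-\alpha h_{12}}$ of \eqref{e4.9}; (ii) treating the degenerate cases $h(t)=h_1$ and $h(t)=h_2$ in the reciprocally convex step, where one of the fractions becomes singular while the corresponding $\Upsilon_j\chi_1$ vanishes in the limit; and (iii) confirming the convexity-in-$h$ argument that permits \eqref{e4.9} to be imposed only at the two endpoints $h_1,h_2$ instead of for all $h\in[h_1,h_2]$.
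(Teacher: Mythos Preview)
Your proposal is correct and follows essentially the same route as the paper's own proof: the Lyapunov--Krasovskii functional you write is identical to the paper's (your prefactor $e^{\alpha h_1}$ is exactly the paper's choice of weight $e^{\alpha(u+h_1-t)}=e^{\alpha h_1}e^{\alpha(u-t)}$ in the $R_2$ double integral), and the subsequent steps --- Lemma~\ref{lm3.1} for the $R_1$-term, the crude bound $e^{\alpha(s-t)}\ge e^{-\alpha h_2}$ followed by the Wirtinger inequality \eqref{e3.8} and the reciprocally convex combination for the $R_2$-term, and the convexity-in-$h$ argument reducing \eqref{e4.9} to the two endpoints --- all match the paper exactly, including the treatment of the degenerate cases $h(t)\in\{h_1,h_2\}$.
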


First, we need the following lemmas.

\begin{lemma}\label{lm4.1}
If $\Omega(h_1)<0$ and $\Omega(h_2)<0$ then $\Omega(h)<0$, $\forall h\in[h_1,h_2]$.
\end{lemma}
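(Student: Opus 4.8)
The plan is to show that $\Omega(h)$ is \emph{affine} (in fact, a polynomial of low degree) in the single scalar parameter $h$, and then invoke convexity together with the hypotheses at the two endpoints $h_1$ and $h_2$. Looking at the definition $\Omega(h)=\Omega_0(h)+\Omega_1+\Omega_2-\Omega_3-e^{-\alpha h_{12}}\Delta^T\Pi\Delta$, the terms $\Omega_1$, $\Omega_2$, $\Omega_3$ and $e^{-\alpha h_{12}}\Delta^T\Pi\Delta$ do not involve $h$ at all (they depend only on the fixed data $h_1,h_2,\alpha$ and the decision matrices), so the entire $h$-dependence is concentrated in $\Omega_0(h)=\Upsilon(h)^TP\Upsilon_0+\Upsilon_0^TP\Upsilon(h)+\alpha\Upsilon(h)^TP\Upsilon(h)$. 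Since $\Upsilon(h)=\mathrm{col}\{e_1,\,h_1e_5,\,(h-h_1)e_6+(h_2-h)e_7\}$ is itself affine in $h$ — write $\Upsilon(h)=\Upsilon^{(0)}+h\,\Upsilon^{(1)}$ with $\Upsilon^{(1)}=\mathrm{col}\{0,0,e_6-e_7\}$ — the first two terms of $\Omega_0(h)$ are affine in $h$ while the quadratic term $\alpha\,\Upsilon(h)^TP\Upsilon(h)$ is a quadratic polynomial in $h$ whose leading coefficient is $\alpha\,(\Upsilon^{(1)})^TP\,\Upsilon^{(1)}\geq 0$ because $P>0$.

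First I would record the decomposition $\Omega(h)=\Omega(h_1)+\dfrac{h-h_1}{h_2-h_1}\big(\Omega(h_2)-\Omega(h_1)\big)+(h-h_1)(h-h_2)\,\mathcal{M}$, where $\mathcal{M}=-\alpha\,(\Upsilon^{(1)})^TP\,\Upsilon^{(1)}\leq 0$ is the negated leading coefficient of the quadratic in $h$; equivalently, $\Omega(h)$ interpolates its endpoint values with a downward-opening quadratic correction on $[h_1,h_2]$. For $h\in[h_1,h_2]$ the scalar factor $(h-h_1)(h-h_2)$ is $\le 0$, so $(h-h_1)(h-h_2)\mathcal{M}\geq 0$ in the matrix sense; hence $\Omega(h)\preceq \Omega(h_1)+\theta\big(\Omega(h_2)-\Omega(h_1)\big)=(1-\theta)\Omega(h_1)+\theta\Omega(h_2)$ with $\theta=(h-h_1)/(h_2-h_1)\in[0,1]$. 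Then the convex combination of two negative definite matrices is negative definite, which gives $\Omega(h)<0$ on all of $[h_1,h_2]$.

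The one point that needs genuine care, rather than just bookkeeping, is verifying that $\mathcal{M}$ really has the right sign, i.e. that the coefficient of $h^2$ in $\Omega(h)$ comes \emph{only} from the manifestly positive-semidefinite term $\alpha\,\Upsilon(h)^TP\Upsilon(h)$ and that no hidden $h^2$ contribution sneaks in through $\Omega_0$'s cross terms or through the pieces I claimed are $h$-independent. This amounts to checking: (i) $\Upsilon_0$ and all of $\Omega_1,\Omega_2,\Omega_3,\Delta,\Pi$ are built purely from the constant matrices $e_i$, $\mathcal{A}$, and the fixed scalars, with $h(t)$ appearing in $\chi_1(t)$ only through normalizing denominators that cancel — so the quadratic form coefficients are $h$-free; and (ii) $\Upsilon(h)$ is exactly affine in $h$, so $\Upsilon(h)^TP\Upsilon_0+\Upsilon_0^TP\Upsilon(h)$ is affine and $\alpha\Upsilon(h)^TP\Upsilon(h)$ is at most quadratic with positive-semidefinite leading term. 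Once these are confirmed, the convexity argument closes immediately and there is no further obstacle.
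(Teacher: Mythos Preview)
Your approach is essentially the paper's own: both arguments amount to showing that $\Omega(h)$ is a convex matrix-valued function of the scalar $h$ and hence lies below the chord through its endpoint values. The paper does this in one line by computing $\dfrac{d^2}{dh^2}\Omega(h)=2\alpha\,\Gamma^TP\Gamma\geq 0$ with $\Gamma=\mathrm{col}\{0,0,e_6-e_7\}$, which is exactly your $\Upsilon^{(1)}$; your explicit quadratic interpolation is just the integrated form of the same observation.

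One bookkeeping slip to fix: in the identity
\[
\Omega(h)=(1-\theta)\Omega(h_1)+\theta\,\Omega(h_2)+(h-h_1)(h-h_2)\,C,
\]
the matrix $C$ must be the \emph{actual} leading coefficient $\alpha(\Upsilon^{(1)})^TP\Upsilon^{(1)}\geq 0$, not its negation $\mathcal{M}$ as you wrote (match the $h^2$ terms on both sides). With the correct sign, $(h-h_1)(h-h_2)\,C\leq 0$ on $[h_1,h_2]$, so $\Omega(h)\preceq (1-\theta)\Omega(h_1)+\theta\,\Omega(h_2)<0$ as you intended; as stated, your line ``$(h-h_1)(h-h_2)\mathcal{M}\geq 0$ hence $\Omega(h)\preceq\ldots$'' is internally inconsistent.
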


\begin{proof}
It is obvious that $\frac{d^2}{dh^2}\Omega(h)=2\alpha\Gamma^TP\Gamma\geq 0$,
where $\Gamma=[0_{7n\times 2n}\; (e_6-e_7)^T]^T$.
Therefore, $\Omega(h)$ is a convex quadratic function with respect to $h$.
This completes the proof.
\end{proof}

\begin{lemma}[Improved reciprocally convex combination \cite{SG2}]\label{lm4.2}
For given symmetric positive definite matrices $R_1\in\Bbb{R}^{n\times n}$,
$R_2\in\Bbb{R}^{m\times m}$, if there exists a matrix $X\in\mathbb{R}^{n\times m}$ such that
$\begin{bmatrix} R_1&X\\ * &R_2\end{bmatrix}\geq 0$ then the inequality
\[
\begin{bmatrix} \frac{1}{\delta}R_1 & 0\\ 0 &\frac{1}{1-\delta}R_2\end{bmatrix} \geq
\begin{bmatrix} R_1&X\\  * & R_2\end{bmatrix}
\]
holds for all $\delta\in (0,1)$.
\end{lemma}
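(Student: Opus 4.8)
The plan is to show that the asserted inequality is equivalent to the positive semidefiniteness of the \emph{gap matrix}
\[
M_\delta = \begin{bmatrix} \tfrac{1}{\delta}R_1 & 0 \\ 0 & \tfrac{1}{1-\delta}R_2 \end{bmatrix} - \begin{bmatrix} R_1 & X \\ X^T & R_2 \end{bmatrix},
\]
and then to prove $M_\delta \geq 0$ directly. First I would simplify the diagonal blocks using $\tfrac{1}{\delta}-1=\tfrac{1-\delta}{\delta}$ and $\tfrac{1}{1-\delta}-1=\tfrac{\delta}{1-\delta}$, so that
\[
M_\delta = \begin{bmatrix} \tfrac{1-\delta}{\delta}R_1 & -X \\ -X^T & \tfrac{\delta}{1-\delta}R_2 \end{bmatrix}.
\]
Both scalar coefficients are strictly positive for $\delta\in(0,1)$ and $M_\delta$ is symmetric, so the goal reduces to verifying $z^T M_\delta z\geq 0$ for every $z=\mathrm{col}\{u,v\}$ with $u\in\mathbb{R}^n$, $v\in\mathbb{R}^m$.

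The key step is to recognise $z^T M_\delta z$ as the quadratic form of the hypothesis matrix evaluated at a suitably rescaled test vector. Expanding gives $z^T M_\delta z = \tfrac{1-\delta}{\delta}\,u^T R_1 u - 2u^T X v + \tfrac{\delta}{1-\delta}\,v^T R_2 v$. On the other hand, the assumption $\left[\begin{smallmatrix} R_1 & X \\ X^T & R_2 \end{smallmatrix}\right]\geq 0$ means $a^T R_1 a + 2a^T X b + b^T R_2 b\geq 0$ for all $a,b$. Choosing $a=\sqrt{\tfrac{1-\delta}{\delta}}\,u$ and $b=-\sqrt{\tfrac{\delta}{1-\delta}}\,v$ makes the two diagonal contributions equal to $\tfrac{1-\delta}{\delta}\,u^T R_1 u$ and $\tfrac{\delta}{1-\delta}\,v^T R_2 v$, while the cross term becomes $2a^T X b=-2u^T X v$ since the product of the two square-root scalars equals one and the sign comes from $b$. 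The expression then collapses exactly to $z^T M_\delta z$, so $z^T M_\delta z\geq 0$ for all $z$, giving $M_\delta\geq 0$ and completing the proof.

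Alternatively, I would reach the same conclusion by a Schur-complement argument of the kind already used in the proof of Lemma~\ref{lm2.1}: since $\tfrac{1-\delta}{\delta}R_1>0$, the matrix $M_\delta\geq 0$ holds iff its Schur complement $\tfrac{\delta}{1-\delta}R_2-(-X^T)\big(\tfrac{1-\delta}{\delta}R_1\big)^{-1}(-X)=\tfrac{\delta}{1-\delta}\big(R_2-X^T R_1^{-1}X\big)$ is positive semidefinite, and this is guaranteed because the hypothesis, again by Schur complement with $R_1>0$, is equivalent to $R_2-X^T R_1^{-1}X\geq 0$.

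The proof is short, so there is no serious obstacle; the only nonroutine ingredient is the $\delta$-dependent rescaling $a=\sqrt{(1-\delta)/\delta}\,u$, $b=-\sqrt{\delta/(1-\delta)}\,v$, whose design is dictated by the requirement that the scalar multipliers on the two diagonal terms match those of $M_\delta$ while the product of the cross-term scalings equals one. Once this scaling is identified, everything else is elementary linear algebra. I would confirm only informally that the coefficients and square roots are well defined throughout, which is precisely why the statement is restricted to the open interval $\delta\in(0,1)$.
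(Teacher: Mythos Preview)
Your proposal is correct. Both the direct rescaling argument and the Schur-complement alternative are valid and standard ways to establish this reciprocally convex combination inequality.

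Note, however, that the paper does not actually prove Lemma~\ref{lm4.2}: it is quoted from \cite{SG2} as a known tool, and the \texttt{proof} environment that follows it in the source is the delayed proof of Theorem~\ref{thm4.2}, not of the lemma. So there is no ``paper's own proof'' to compare against here. Your rescaling argument is essentially the classical one underlying the reciprocally convex combination lemma of Park et al.\ and its improved version in \cite{SG2}; the Schur-complement route you sketch is an equivalent reformulation. Either would be an acceptable self-contained justification to insert where the paper merely cites the result.
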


\begin{proof} Inspired from \cite{SG2}, we now consider the following LKF
\begin{align}
V(x_t)=\; & \chi_0^T(t)P\chi_0(t)+\int_{t-h_1}^te^{\alpha(s-t)}x^T(s)Q_1x(s)ds\notag\\
&+\int_{t-h_2}^{t-h_1}e^{\alpha(s-t)}x^T(s)Q_2x(s)ds\label{e4.10}\\
&+h_1\int_{-h_1}^0\int_{t+s}^te^{\alpha(u-t)}\dot{x}^T(u)R_1\dot{x}(u)duds\notag\\
&+h_{12}\int_{-h_2}^{-h_1}\int_{t+s}^te^{\alpha(u+h_1-t)}\dot{x}^T(u)R_2\dot{x}(u)duds\notag
\end{align}
where $\chi_0(t)=\mathrm{col}\{x(t),\int_{t-h_1}^tx(s)ds,\int_{t-h_2}^{t-h_1}x(s)ds\}$.

It follows from \eqref{e4.10} that $V(x_t)\geq\lambda_{\min}(P)\|x(t)\|^2$. 
In regard to the fact $\chi_0(t)=G_0(h)\chi_1(t)$ and $\frac{d}{dt}\chi_0(t)=G_1\chi_1(t)$,
the derivative of \eqref{e4.10} along trajectories of \eqref{e4.7} gives 
\begin{equation}\label{e4.11}
\begin{aligned}
\dot V(x_t)&+\alpha V(x_t)=\chi_1^T(t)\left(\Omega_0(h)+\Omega_1+\Omega_2\right)\chi_1(t)\\
&-h_1\int_{t-h_1}^te^{\alpha(s-t)}\dot{x}^T(s)R_1\dot{x}(s)ds\\
&-h_{12}\int_{t-h_2}^{t-h_1}e^{\alpha(s+h_1-t)}\dot{x}^T(s)R_2\dot{x}(s)ds.
\end{aligned}
\end{equation}

By Lemma \ref{lm3.1} we have
\begin{equation}\label{e4.12}
-h_1\int_{t-h_1}^te^{\alpha(s-t)}\dot{x}^T(s)R_1\dot{x}(s)ds\leq -\chi_1^T(t)\Omega_3\chi_1(t).
\end{equation}

Next, by splitting
\begin{align*}
\int_{t-h_2}^{t-h_1}&e^{\alpha(s+h_1-t)}\dot{x}^T(s)R_2\dot{x}(s)ds\\
&=\int_{t-h(t)}^{t-h_1}e^{\alpha(s+h_1-t)}\dot{x}^T(s)R_2\dot{x}(s)ds\\
&+\int_{t-h_2}^{t-h(t)}e^{\alpha(s+h_1-t)}\dot{x}^T(s)R_2\dot{x}(s)ds
\end{align*}
the second integral term of \eqref{e4.11} can be bounded by \eqref{e3.8} and Lemma \ref{lm4.1} as follows
\begin{align*}
-h_{12}&\int_{t-h_2}^{t-h_1}e^{\alpha(s+h_1-t)}\dot{x}^T(s)R_2\dot{x}(s)ds\\
&\leq -\frac{h_{12}e^{-\alpha h_{12}}}{h(t)-h_1}\chi_2^T(t)\tilde{R}_2\chi_2(t)
-\frac{h_{12}e^{-\alpha h_{12}}}{h_2-h(t)}\chi_3^T(t)\tilde{R}_2\chi_3(t)\\
&=-e^{-\alpha h_{12}}\begin{bmatrix}\chi_2(t)\\\chi_3(t)\end{bmatrix}^T
\begin{bmatrix}\frac{h_{12}}{h(t)-h_1}\tilde{R}_2 &0\\ 0&\frac{h_{12}}{h_2-h(t)}\tilde{R}_2\end{bmatrix}
\begin{bmatrix}\chi_2(t)\\\chi_3(t)\end{bmatrix}\\
&\leq -e^{-\alpha h_{12}}\begin{bmatrix}\chi_2(t)\\\chi_3(t)\end{bmatrix}^T
\begin{bmatrix}\tilde{R}_2 &X\\ *&\tilde{R}_2\end{bmatrix}
\begin{bmatrix}\chi_2(t)\\\chi_3(t)\end{bmatrix}
\end{align*}
where $\chi_2(t)=\Upsilon_2\chi_1(t)$ and $\chi_3(t)=\Upsilon_3\chi_1(t)$.
Note that the previous inequality is still valid when $h(t)$ tends to $h_1$ or $h_2$.
This leads to
\begin{equation}\label{e4.13}
\begin{aligned}
-h_{12}\int_{t-h_2}^{t-h_1}&e^{\alpha(s+h_a-t)}\dot{x}^T(s)R_2\dot{x}(s)ds\\
&\leq -e^{-\alpha h_{12}}\chi_1^T(t)\Delta^T\Pi\Delta\chi_1(t).
\end{aligned}
\end{equation}

Combining \eqref{e4.11}-\eqref{e4.13} we then obtain
\begin{equation}\label{e4.14}
\dot V(x_t)+\alpha V(x_t)\leq \chi_1^T(t)\Omega(h)\chi_1(t).
\end{equation}
By Lemma \ref{lm4.1}, \eqref{e4.9} implies that $\Omega(h)<0$ for all $h\in[h_1,h_2]$. 
Therefore, if \eqref{e4.9} holds for $h=h_1$ and $h=h_2$ then, from \eqref{e4.14}, $\dot V(x_t)+\alpha V(x_t)\leq 0$
which concludes the exponential stability of \eqref{e4.7} with guaranteed decay rate $\sigma=\alpha/2$.
The proof is completed.
\end{proof}

\begin{remark}\label{rm4.2}
When $\alpha$ approaches zero, by Remark \ref{rm3.3} and Theorem \ref{thm4.2}, we obtain the same
asymptotic stability conditions for system \eqref{e4.7} derived from improved Wirtinger's inequality \cite{SG2}.
\end{remark}

 \begin{corollary}\label{cr4.1}
 System \eqref{e4.7} is asymptotically stable for any delay $h(t)\in[h_1,h_2]$
if there exist symmetric positive definite matrices
 $P\in\Bbb{R}^{3n\times 3n}$, $Q_i, R_i\in\Bbb{R}^{n\times n}$, $i=1,2$, 
and a matrix $X\in\Bbb{R}^{2n\times 2n}$
 satisfying \eqref{e4.8} and the following LMI holds for $h\in\{h_1,h_2\}$
 \begin{equation}\label{e4.15}
 \tilde{\Omega}_0(h)+\Phi_1-\Phi_2-\Delta^T\Pi\Delta<0,
 \end{equation}
where
\begin{align*}
\tilde{\Omega}_0(h)&=\Upsilon(h)^TP\Upsilon_0+\Upsilon_0^TP\Upsilon(h),\\
\Phi_1&=e_1^TQ_1e_1-e_2^TQ_1e_2+e_2^TQ_2e_2-e_4^TQ_2e_4\\
&\quad +\mathcal{A}^T\left(h_1^2R_1+h_{12}^2R_2\right)\mathcal{A},\\
\Phi_2&=\Upsilon_4^T\mathrm{diag}\{R_1,3R_1\}\Upsilon_4,\\
\Upsilon_4&=\mathrm{col}\{e_1-2_2,e_1+e_2-2e_5\}.
\end{align*}
 \end{corollary}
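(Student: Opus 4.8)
The plan is to prove Corollary~\ref{cr4.1} as the unweighted ($\alpha=0$) counterpart of Theorem~\ref{thm4.2}: the argument runs along exactly the same lines, but with the exponential weights deleted from the Lyapunov--Krasovskii functional and the single Wirtinger-type inequality \eqref{e3.8} (the $\alpha\to0^+$ limit of Lemma~\ref{lm3.1}, cf.\ Remark~\ref{rm3.3}) used in place of its weighted counterpart.

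First I would take the functional obtained from \eqref{e4.10} by setting $\alpha=0$, i.e.\ by deleting every factor $e^{\alpha(\cdot)}$ while keeping $\chi_0(t)=\mathrm{col}\{x(t),\int_{t-h_1}^tx(s)ds,\int_{t-h_2}^{t-h_1}x(s)ds\}$ as before. Since $P,Q_i,R_i>0$, this $V$ still satisfies $\lambda_{\min}(P)\|x(t)\|^2\le V(x_t)\le\mu\|x_t\|_C^2$ for some $\mu>0$, so it is an admissible Lyapunov--Krasovskii functional.

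Next I would differentiate $V$ along the solutions of \eqref{e4.7}. Using $\chi_0(t)=G_0(h)\chi_1(t)$, $\frac{d}{dt}\chi_0(t)=G_1\chi_1(t)$ and $\dot x(t)=\mathcal A\chi_1(t)$, the contributions of $P$, of $\int x^TQ_1x$, of $\int x^TQ_2x$ and the term $\dot x^T(h_1^2R_1+h_{12}^2R_2)\dot x$ assemble into $\chi_1^T(t)(\tilde\Omega_0(h)+\Phi_1)\chi_1(t)$; this is precisely the $\alpha=0$ specialisation of $\Omega_0(h)+\Omega_1+\Omega_2$ from Theorem~\ref{thm4.2}, since the quadratic term $\alpha\Upsilon(h)^TP\Upsilon(h)$ vanishes and every weight $e^{\pm\alpha h_i}$ becomes $1$. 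To the residual term $-h_1\int_{t-h_1}^t\dot x^T(s)R_1\dot x(s)\,ds$ I would apply \eqref{e3.8}, which gives $-\chi_1^T(t)\Phi_2\chi_1(t)$ with $\Phi_2=\Upsilon_4^T\mathrm{diag}\{R_1,3R_1\}\Upsilon_4$. For the $R_2$ term I would split $\int_{t-h_2}^{t-h_1}=\int_{t-h(t)}^{t-h_1}+\int_{t-h_2}^{t-h(t)}$, apply \eqref{e3.8} to each sub-integral to obtain $\frac{h_{12}}{h(t)-h_1}\chi_2^T\tilde R_2\chi_2$ and $\frac{h_{12}}{h_2-h(t)}\chi_3^T\tilde R_2\chi_3$ with $\chi_2=\Upsilon_2\chi_1$, $\chi_3=\Upsilon_3\chi_1$, $\tilde R_2=\mathrm{diag}\{R_2,3R_2\}$, and then invoke Lemma~\ref{lm4.2} together with \eqref{e4.8} to bound their sum by $\chi_1^T\Delta^T\Pi\Delta\chi_1$; this estimate stays valid as $h(t)\to h_1^+$ or $h(t)\to h_2^-$, since then one of the sub-integrals vanishes. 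Collecting the estimates yields $\dot V(x_t)\le\chi_1^T(t)(\tilde\Omega_0(h(t))+\Phi_1-\Phi_2-\Delta^T\Pi\Delta)\chi_1(t)$.

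Finally, the matrix in brackets is \emph{affine} in $h(t)$ — because $\Upsilon(h)$ is affine in $h$ and no other term depends on $h$ — so, unlike in Lemma~\ref{lm4.1}, no convexity argument is needed: negativity at the two endpoints $h=h_1$ and $h=h_2$, which is exactly \eqref{e4.15}, forces negativity for all $h\in[h_1,h_2]$. By compactness there is then some $\varepsilon>0$ with $\tilde\Omega_0(h)+\Phi_1-\Phi_2-\Delta^T\Pi\Delta\le-\varepsilon I$ throughout $[h_1,h_2]$, whence $\dot V(x_t)\le-\varepsilon\|x(t)\|^2$, and the Lyapunov--Krasovskii theorem gives asymptotic stability for every admissible delay $h(t)\in[h_1,h_2]$. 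I expect the only delicate points to be purely notational: checking that the $\alpha=0$ collapse of $\Omega_0(h)+\Omega_1+\Omega_2$ really is $\tilde\Omega_0(h)+\Phi_1$, and that $\Omega_3$ degenerates to $\Phi_2$ under the limits of Remark~\ref{rm3.3} ($\frac{\alpha h_1}{\tilde\gamma_0}\to1$, $\frac{\alpha h_1}{\tilde\rho_0}\to3$, $\tilde L_1\to[1\quad-2/h_1]$); there is no genuine obstacle here, the substance being inherited from Theorem~\ref{thm4.2} and Remark~\ref{rm3.3}.
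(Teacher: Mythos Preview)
Your proposal is correct and follows essentially the same route as the paper: Remark~\ref{rm4.2} indicates that Corollary~\ref{cr4.1} is nothing but the $\alpha\to 0^+$ specialisation of Theorem~\ref{thm4.2} via Remark~\ref{rm3.3}, which is precisely what you carry out in detail. Your observation that at $\alpha=0$ the matrix $\tilde\Omega_0(h)+\Phi_1-\Phi_2-\Delta^T\Pi\Delta$ is \emph{affine} (not merely convex) in $h$, so that the convexity argument of Lemma~\ref{lm4.1} is unnecessary, is a mild refinement but not a genuinely different approach.
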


\section{Examples}\label{sec:5}

\begin{example}\rm
Consider the following system \cite{SG,Park}
\begin{equation}\label{e5.1}
\dot x(t)=\begin{bmatrix}0.2&0\\0.2&0.1\end{bmatrix}x(t)
+\begin{bmatrix}-1 &0\\ -1 &-1\end{bmatrix}\int_{t-h}^tx(s)ds.
\end{equation}

An eigenvalue analysis \cite{SG} shows that \eqref{e5.1} remains stable for a constant delay in the range $h\in[0.2,2.04]$.
By the Wirtinger-based inequality approach, Theorem 6 in \cite{SG} and Theorem 1 in \cite{Park} guarantee the
asymptotic stability of \eqref{e5.1} for $h$ in the interval $[0.2,1.877]$ and $[0.2,1.9504]$, respectively.
In Theorem \ref{thm4.1} we fix $\alpha$ at $0.0002$, it is found that \eqref{e4.2} is feasible for $h\in[0.2,1.9778]$.
This clearly shows a reduction of conservatism of Theorem \ref{thm4.1}.
\end{example}

\begin{example}\rm
Consider system \eqref{e4.1} with the matrices taken from the literature
\[A_0=\begin{bmatrix} 0 & 1\\ -2 & 0.1\end{bmatrix},\quad
A_1=\begin{bmatrix} 0 & 0\\ 1 & 0\end{bmatrix},\quad
A_2=\begin{bmatrix}0&0\\0&0\end{bmatrix}.
\]

It is surprising  that, for this system, the exponential stability criteria proposed in \cite{Mond,Xu06,Liu},
especially \cite{Cao}, are not feasible for any $h>0, \alpha>0$.
By using the notations given in \cite{FC},
we have $\mathcal{M}=A_0^M+|A_1|=\begin{bmatrix}0 & 1\\ 3&0.1\end{bmatrix}$.
Thus, for any $v=(v_1,v_2)^T\in\Bbb{R}^2$,
$v_1>0,v_2>0$, $\mathcal{M}v=\begin{bmatrix}v_2\\ 3v_1+v_2\end{bmatrix}>0$.
This shows that the stability conditions proposed in \cite{FC} and \cite{Ngoc} are not
applicable to this system. Note also that, since $Re(eig(A_0+A_1))=0.05>0$, the delay-free
system is unstable and the results to access exponential stability of this system are much more scarce.
By employing the Wirtinger-based integral inequality, a significant improvement
of the asymptotic stability conditions was provided in \cite{SG}. To compare with our approach, we apply
Theorem 6 in \cite{SG} to \eqref{e4.6}. The obtained results in Table 1 show that,
 thanks to our new weighted integral inequalities proposed
in Lemma \ref{lm3.1} and Lemma \ref{lm3.2}, Theorem \ref{thm4.1} in this paper gives significantly better results.
The simulation result presented in Figure 1 is taken with $h=1.6$, $\sigma=0.045$, 
incremental step $s=10^{-4}$ and initial condition
$\phi(t)=[2\quad -1]^T$. It can be seen that the state trajectory $z(t)=e^{\sigma t}x(t)$ is bounded, and thus,
$x(t)$ exponentially converges to the origin with decay rate $\sigma=0.045$.

 \begin{table*}[!htbp]\label{tb:1}
 \caption{Decay rate $\sigma$ for various $h$ in Example 5.2}
 \begin{tabular*}{\textwidth}{l @{\extracolsep{\fill}} lllllll}\hline
 $h$& 0.3 & 0.5 & 0.8 & 1.0 & 1.5&1.6& NoDv\\ \hline
  \cite{SG} &0.0971& 0.1905& 0.2936 & 0.2766 & 0.0175 &- & $3n^2+2n$\\
  Theorem \ref{thm4.1} & 0.0971&0.2095&0.4195&0.4978&0.1039&0.045 &$6n^2+3n$\\
  \hline
  \end{tabular*}
  {NoDv: Number of Decision variable}
  \end{table*}

\begin{figure}[!ht]
\centering
\includegraphics[width=0.5\textwidth]{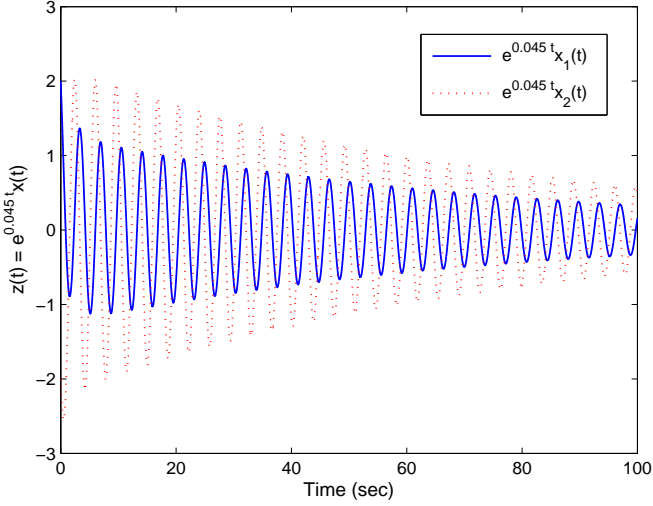}
\caption{Trajectory $z(t)=e^{0.045 t}x(t)$ with $h=1.6$.}\label{fig:1}
\end{figure}
\end{example}

\begin{example}\rm
Consider an active quarter-car suspension
system with control delay introduced in \cite{LJK}
\begin{equation}\label{e5.2}
\begin{cases}
\dot x(t)=Ax(t)+Bu(t-h(t)),\\
y(t)=Cx(t),
\end{cases}
\end{equation}
where $x(t)\in\Bbb{R}^4$ is the state, $u(t)$ is the control input,
$y(t)$ is the output and
\begin{align*}
&A=\begin{bmatrix}0 &0&1&-1\\ 0&0&0&1
\\ -\frac{k_s}{m_s}&0&-\frac{c_s}{m_s}&\frac{c_s}{m_s}\\
\frac{k_s}{m_u}&-\frac{k_t}{m_u}&\frac{c_s}{m_u}&-\frac{c_s+c_t}{m_u}\end{bmatrix},\\
&B=\begin{bmatrix}0\\0\\ \frac1{m_s}\\-\frac{1}{m_u}\end{bmatrix},\quad 
C=\begin{bmatrix} 1\\ 1 \\ 1\\ 0\end{bmatrix}^T.
\end{align*}

The following parameters are taken from \cite{LJK}.

 \begin{table}[!htbp]\label{tb2}
 \centering
 \caption{Quarter-car model parameters}
 \begin{tabular*}{0.48\textwidth}{@{\extracolsep{\fill}} cccccc}\hline
 $m_s$ & $m_u$ & $k_s$ & $k_t$ & $c_s$ & $c_t$ \\ \hline
 973kg &114kg &42720N/m &101115N/m &1095Ns/m & 14.6Ns/m\\
  \hline
  \end{tabular*}
 \end{table}

A static output feedback controller is proposed as $u(t)=Ky(t)$, where
$K$ is the controller gain. The closed-loop system is then in the form of \eqref{e4.7}
with $A_d=BKC$. For illustrative purpose we consider $K=1$.
It is noted first that, in this case, the exponential stability criteria proposed in
\cite{FC,Ngoc} based on positive system approach
also cannot access the exponential stability of the system.
 In \cite{GGXH}, some integral equalities were used to overcome the conservative estimates.
 However, when manipulating the derivative of the Lyapunov-Krasovskii functional,
all the integral terms were abandoned (see, Eq. (10) in \cite{GGXH}). This leads to the fact that the proposed
conditions in \cite{GGXH} are very conservative. We apply the main theorem in \cite{GGXH} to this example,
the obtained results for $h_1=1$ and various $h_2$ are listed in Table 3.
In \cite{ZP}, exponential convergence rate of solutions was derived
by estimating the maximal Lyapunov exponents. By Theorem 3 in \cite{ZP},
the exponential convergence rate $\sigma\in (0,\lambda_*]$, where $\lambda_*$ is the unique
positive solution of equation $\lambda+0.0087e^{\lambda h_2}=0.2707$.
We apply Theorem \ref{thm4.2} in this paper for $h_1=1$ and various $h_2$.
The obtained results are exposed in Table 3. Clearly a significant reduction of
conservatism is delivered by Theorem \ref{thm4.2}. This shows the effectiveness of our approach.
The simulation result presented in Figure 2 is taken with $h(t)=1+5|\sin(t)|$, $\sigma=0.2546$
and incremental step $s=10^{-4}$ which illustrates our theoretical results.

\begin{table*}[!htbp]\label{tb:3}
\caption{Decay rate $\sigma$ for $h_1=1$ and various $h_2$}
\begin{tabular*}{\textwidth}{l @{\extracolsep{\fill}} llllll}\hline
$h_2$& 2 & 3& 4 & 5 & 6& NoDv\\ \hline
\cite{GGXH} &0.1338&0.1316&0.1300&0.1290&0.1267&$50.5n^2+6.5n$\\
\cite{ZP} &0.2562&0.2522& 0.2473 & 0.2416 & 0.2351&- \\
Theorem \ref{thm4.2} & 0.2690&0.2672&0.2644&0.2603&0.2546& $10.5n^2+3.5n$\\
\hline
\end{tabular*}
\end{table*}
\end{example}

\begin{figure}[!ht]
\centering
\includegraphics[width=0.5\textwidth]{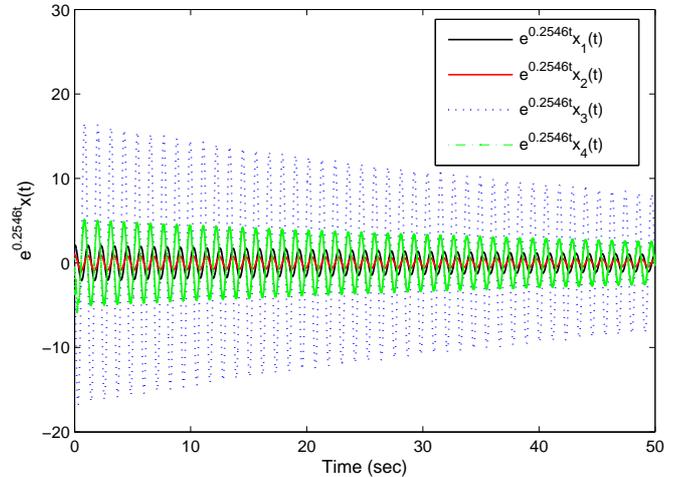}
\caption{Trajectory $e^{0.2546 t}x(t)$ with $h(t)=1+5|\sin(t)|$.}\label{fig:2}
\end{figure}

\begin{example}\rm
Consider system \eqref{e4.7} with
$$A=\begin{bmatrix} 0&1\\ -10 &-1\end{bmatrix},\quad
A_d=\begin{bmatrix}0&0.1\\ 0.1 &0.2\end{bmatrix}.$$

This example has been taken from \cite{PLL}. The obtained results by Corollary \ref{cr4.1} as listed in Table 4.
These results again show the effectiveness of our approach in proposed in this paper.

\begin{table*}[!htbp]\label{tb:4}
\caption{Upper bound of $h_2$ for various $h_1=1$}
\begin{tabular*}{\textwidth}{l @{\extracolsep{\fill}} llllll}\hline
$h_1$&0& 0.3& 0.7 & 1 & 2& NoDv\\ \hline
\cite{GGXH} &0.55&0.77&1.17&1.47&2.47&$50.5n^2+6.5n$\\
\cite{LLP} &1.35&1.64&2.02 &2.31 & 3.31& $9.5n^2+5.5n$ \\
\cite{PLL} &1.64&2.13&2.70&2.96&3.63&$21n^2+6n$\\
Corollary \ref{cr4.1} & 1.88&2.18&2.53&2.81&3.78& $10.5n^2+3.5n$\\
\hline
\end{tabular*}
\end{table*}
\end{example}

\section{Conclusion}

In this paper, new weighted integral inequalities (WIIs)
have been proposed. By employing WIIs,
new exponential stability criteria have been derived for
some classes of time-delay systems in the framework of
linear matrix inequalities. Numerous examples have been provided to
show the potential of WIIs and a large improvement
on the exponential convergence rate over the existing methods.


\begin{thebibliography}{99}
\bibitem{Gu} K. Gu, S.I. Niculescu,  Survey on recent results in the
stability and control of time-delay systems,
J. Dyn. Syst., Meas., and Control 125 (2003) 158--165.

\bibitem{Sipa} R. Sipahi, S.I. Niculescu, C.T. Abdallah, W. Michiels, K. Gu,
Stability and stabilization of systems with time delay, IEEE Control Syst. 31 (2011) 38--65.

\bibitem{Li} Y. Li, K. Gu, J. Zhou, S. Xu, Estimating stable delay
intervals with a discretized Lyapunov-Krasovskii functional formulation,
Automatica 50 (2014) 1691--1697.

\bibitem{FC} H. R. Feyzmahdavian, T. Charalambous, M. Johansson,
Exponential stability of homogeneous positive systems of degree one with time-varying delays,
IEEE Trans. Autom. Control 59 (2014) 1594--1599.

\bibitem{HH} L.V. Hien, H.M. Trinh, A new approach to state bounding
 for linear time-varying systems with delay and bounded disturbances,
Automatica 50 (2014) 1735--1738.

\bibitem{SG} A. Seuret, F. Gouaisbaut, Wirtinger-based integral inequality: 
Application to time-delay systems, Automatica 49 (2014) 2860--2866.

\bibitem{Mond} S. Mondi\'e, V. Kharitonov, Exponential estimates for retarded
 time-delay systems: An LMI approach,
IEEE Trans. Autom. Control 50 (2005) 268--273.

\bibitem{Xu06} S. Xu, J. Lam, M. Zhong, New exponential estimates for time-delay systems, 
IEEE Trans. Autom. Control, 51 (2006) 1501--1505.

\bibitem{Liu} P.L. Liu, Exponential stability for linear time-delay systems with delay dependence,
 J. Franklin Inst. 340 (2003) 481--488.

\bibitem{RC} F. Ren, J. Cao, Novel $\alpha$-stability of linear systems with multiple time delays,
 Appl. Math. Comput. 181 (2006) 282--290.

\bibitem{KP} O.M. Kwon, J.H. Park, Exponential stability of uncertain dynamic systems including state delay,
  Appl. Math. Lett. 19 (2006) 901--907.

\bibitem{Nam08} P.T. Nam,  An improved criterion for exponential stability of linear systems with multiple time delays, Appl. Math. Comput. 202 (2008) 870--876.

\bibitem{HP09} L.V. Hien, V.N. Phat, Exponential stability and stabilization 
of a class of uncertain linear time-delay systems, J. Franklin Inst. 346 (2009) 611--625.

\bibitem{ZP} A.A. Zevin, M.A. Pinsky, Sharp bounds for Lyapunov exponents 
and stability conditions for uncertain systems with delays,
IEEE Trans. Autom. Control 55 (2010) 1249--1253.

\bibitem{HP11} L.V. Hien, V.N. Phat, New exponential estimate for robust stability 
of nonlinear neutral time-delay systems with convex polytopic uncertainties,
J. Nonl. Conv. Anal. 12 (2011) 541--552.

\bibitem{BNP} T. Botmart, P. Niamsup, V.N. Phat, Delay-dependent exponential stabilization for uncertain linear
systems with interval non-differentiable time-varying delays, Appl. Math. Comput. 217 (2011) 8236--8247.

\bibitem{PKR} V.N. Phat, Y. Khongtham, K. Ratchagit, LMI approach to exponential 
stability of linear systems with interval time-varying delays,
Linear Alg. Appl. 436 (2012) 243--251.

\bibitem{GGXH} L. Guo, H. Gu, J. Xing, X. He, Asymptotic and exponential stability 
of uncertain system with interval delay,
Appl. Math. Comput. 218 (2012) 9997--10006.

\bibitem{Cao} J. Cao, Improved delay-dependent exponential stability criteria for time-delay system, 
J. Franklin Inst., 350 (2013) 790--801.

\bibitem{Ngoc} P.H.A. Ngoc, Stability of positive differential systems with delay, 
IEEE Trans. Autom. Control 58 (2013) 203--209.

\bibitem{GKC} K. Gu, V.L. Kharitonov, J. Chen,  Stability of Time-Delay Systems, Birkh\"auser, Basel, 2003.

\bibitem{Park} M.J. Park, O.M. Kwon, J.H. Park, S.M. Lee, E.J. Cha, Stability of time-delay systems via
Wirtinger-based double integral inequality, Automatica 55 (2015) 204--208.

\bibitem{SH} W.H. Steeb, Y. Hardy, Matrix Calculus and Kronecker Product: 
A Practical Approach to Linear and Multilinear Algebra, World Scientific, 2011.

\bibitem{SG2} A. Seuret, F. Gouaisbaut, E. Fridman, Stability of systems with fast-varying delay using improved
wirtinger's inequality, IEEE Conference on Decision and Control, Florence, Italy, 2013, pp. 946--951.

\bibitem{LJK} H. Li, X. Jing, H.R. Karimi, Output-feedback-based $H_\infty$ control for vehicle suspension systems
with control delay, IEEE Trans. Ind. Electron. 61 (2014) 436--446.

\bibitem{PLL} Auxiliary function-based integral inequalities for quadratic functions and their applications to time-delay systems,
J. Franklin Inst. 352 (2015) 1378--1396.

\bibitem{LLP} W.I. Lee, S.Y. Lee, P. Park, Improved criteria on robust stability and 
$\mathcal{H}_\infty$ performance for linear systems
with interval time-varying delays via new triple integral functionals, Appl. Math. Comput. 243 (2014) 570--577.
\end{thebibliography}
\end{document}